\numberwithin{equation}{section}
\theoremstyle{plain}
\newtheorem*{theorem*}{Theorem}
\newtheorem*{lemma*}{Lemma}
\newtheorem{theorem}{Theorem}
\newtheorem{lemma}{Lemma}[section]
\newtheorem{proposition}[lemma]{Proposition}
\newenvironment{customthm}[1]
  {\innercustomthm}
  {\endinnercustomthm}
\theoremstyle{definition}
\newtheorem{remark}[lemma]{Remark}
\newtheorem{example}[lemma]{Example}
\newtheorem*{remark*}{Remark}
\newtheorem*{example*}{Example}
\newtheorem*{er*}{Examples and Remarks}
\def\V{\Vert}
\begin{document}
	
\title{On Singular Vortex Patches, II: Long-time dynamics}
\author{Tarek M. Elgindi\textsuperscript{1} \and In-Jee Jeong\textsuperscript{2}}
\footnotetext[1]{Department of Mathematics, UC San Diego. E-mail: telgindi@ucsd.edu.}
\footnotetext[2]{Department of Mathematics, Korea Institute for Advanced Study. E-mail: ijeong@kias.re.kr}
\date{\today}

\maketitle

\begin{abstract}
In a companion paper \cite{SVP1}, we gave a detailed account of the well-posedness theory for singular vortex patches. Here, we discuss the long-time dynamics of some of the classes of vortex patches we showed to be globally well-posed in \cite{SVP1}. In particular, we give examples of time-periodic behavior, cusp formation in infinite time at an exponential rate, and spiral formation in infinite time. 
\end{abstract}

\tableofcontents 

\section{Introduction}

A vortex patch is a solution to the incompressible Euler equation: \[\partial_t\omega+u\cdot\nabla\omega=0\] \[u=\nabla^\perp \Delta^{-1}\omega\] which is of the form $\omega(x,t)={\bf 1}_{\Omega(t)}(x)$ with $\Omega(t)\subset\mathbb{R}^2$. Such solutions play an important role in the rigorous analysis of general solutions to the incompressible Euler equation as well as an important role in modeling, from hurricanes to Jupiter's Great Red Spot (\cite{Hur1},\cite{Mar},\cite{PD}). Consequently, the dynamical behavior of planar vortex patches has been considered by many authors for well over a century. The simplest example of a planar vortex patch is the "Rankine vortex" which is the stationary solution $\omega(x,t)={\bf 1}_{B_1(0)}(x)$. This solution was shown to be dynamically stable in certain topologies, while asymptotic stability is known to fail as we shall now observe. Indeed, a remarkable observation of Kirchoff is that if $\Omega(0)$ is the interior of an ellipse, then ${\bf 1}_{\Omega(t)}$ solves the incompressible Euler equation with $\Omega(t)=R_{\alpha(0)t}\Omega(0)$ where $R_{\theta}$ is just the linear transformation which rotates vectors counterclockwise by angle $\theta$ and $\alpha(0)$ is some constant depending on the initial ellipse $\Omega(0)$. In fact, as has been shown in works of Burbea \cite{Bu} and then many others (\cite{HMV,HM,HM2,CCG3}), there is a rich family of time-periodic vortex patch solutions near the base circular solution. We refer the interested readers to a recent development \cite{GSPSY,HM3} and references therein. Such vortex patches have a very regular behavior, but it is unclear whether such behavior is generic even in an asymptotic sense. 

In a different direction, it was observed in several numerical experiments and some theoretical works (\cite{CT,Maj,Dr,DrMc,CM1,CM2,Drit88,PuM}) that small deformations of the initial state ${\bf 1}_{B_1(0)}$ often lead to the boundary of the vortex patch forming filaments which spiral around the center of the patch. Even more drastic behavior was observed in other numerical simulations (\cite{But}) which indicated self intersection in finite time. Finite time self-intersection or pinching was then ruled out in important work of Chemin \cite{C} and then in works of Bertozzi-Constantin \cite{BeCo} and Serfati \cite{Ser1} in the case when the boundary of $\Omega(0)$ is initially smooth. These works, however, do not preclude very rapid formation of small scales. In other words, while a vortex patch cannot self intersect in finite time, it is still possible that a vortex patch self intersects or develops high oscillations in infinite time at a very rapid rate as $t\rightarrow\infty$ as the early numerical simulations seem to suggest. To our knowledge, the infinite-time phenomena described above (spiral formation and pinching) have not been rigorously confirmed, though some results have been obtained in spatial domains with solid boundaries when the vortex patch is allowed to touch the boundary.

The purpose of this work is to rigorously establish some of the phenomena described above and others. Specifically, our goal is to give examples of vortex patches that exhibit non-trivial dynamical behavior as $t\rightarrow\infty$ even though they remain "globally regular." Our first result concerns existence of purely rotating patches which are given by a union of infinite sectors. We prove some classification results on rotating sectors, and obtain in particular existence of purely rotating solutions with zero mean. Note that the existence of rotating patches with \textit{compact support} which have corners at the origin has been numerically observed in \cite{LF1}.

Our second result is the construction of a compactly supported vortex patch in $\mathbb{R}^2$ consisting of eight flower petals coming out of a center (asymptotically near ${\bf 0}$, $\Omega(0)$ looks like eight sectors emanating from the origin). Such a patch can be placed into a very natural global well-posedness class so that for all $t>0$, the local picture at the origin remains asymptotically like eight flower petals and there is propagation of regularity for all time. As $t\rightarrow\infty$, however, we see four of the eight petals being ejected from the origin at an exponential rate in time. That is, each of those pieces of the vortex patch cusp exponentially fast as $t\rightarrow\infty$. While we do not give any global information on the dynamics of the full patch, we are able to control the dynamics exactly at the origin, which reduces to an ODE system, to establish the cusp formation in infinite time. This ``localization'' phenomenon also happens in the case of 3D vortex patches with 3D corners, and based on this observation one can establish finite-time blow up of $\V \omega(t)\V_{L^\infty}$ in time (\cite{EJ3}). We mention that this type of blow-up result is similar to the one proved in \cite{EJB,EJE}.

Our third result is of a more global nature. We take now four identical flower petals intersecting at the origin so that $\Omega(0)$ is 4-fold symmetric. Once more, we have shown in \cite{SVP1} that such initial data can be placed in a natural global well-posedness class. Again, the dynamics exactly at the corner is governed precisely by a predetermined system of ODE; in this case, however, the ODE just dictates that the four corners rotate at a fixed speed depending only on the size of the corners. To make a meaningful statement on the long-time behavior of the vortex patch we need another, global, piece of information. Now we also stipulate that the vortex patch be very close to a disk (the symmetric difference should have small measure). Due to the (infinite-time) stability of the circular vortex patch which has been known for some time (\cite{WP}), we now also have that the velocity field of the vortex patch is close in a certain topology to that of the circular vortex patch. This implies that "most" points of the vortex patch (those away from the origin) are rotating counterclockwise with speed $1$. Now if one can ensure that there is a discrepancy between the rotation of the corner (whose speed can be determined a-priori) and most of the points, for all time, one would believe that the boundary of the vortex patch must develop a spiral in infinite time. Carrying this out rigorously requires us to employ a few analytic tools but also some basic topological methods, but the basic idea is to use both the local stability of the corner dynamics and the global stability of the disk to give spiral formation. While a spiral forms in this case, it remains an open problem whether the perimeter of the boundary of the patch actually becomes unbounded.

We emphasize that vortex patches utilized in the current work involve corners meeting at the origin in a symmetric fashion. The boundary of the patch can be infinitely smooth away from the origin. The work of Danchin \cite{Da} has established that if the patch boundary is smooth away from a closed set, this property propagates globally in time. Global persistence of certain cusp structures were obtained in \cite{Da2}. In the case of patches with a single isolated corner, we have shown in \cite{SVP1} that, roughly speaking, the corner structure cannot propagate continuously in time in general. We refer the interested readers to numerical simulations \cite{CS,CD}. On the other hand, if the corners meet at the origin with rotational symmetry (e.g. Figure \ref{fig:petal}), then we have global well-posedness; see the statements of Theorem \ref{mainthm:wellposedness} below for details. 

\subsection*{Outline of the paper}

The rest of this paper is organized as follows. First in Section \ref{sec:prelim}, we fix some notations and precisely state the central well-posedness result from the companion paper \cite{SVP1}. In particular, the ODE system satisfied by the corner angles will be the basis of all the main results in this work. Then in Sections \ref{sec:rot}, \ref{sec:cusp}, and \ref{sec:spiral}, we construct patch solutions which rotate with constant speed, cusp, and spiral, respectively. %Natural open problems remaining after this work is discussed in Section \ref{sec:open}. 
 %Cusping and spiraling happens in infinite time as they are not allowed in finite time . 

\subsection*{Acknowledgments} 

T.M. Elgindi was partially supported by NSF DMS-1817134. I.-J. Jeong has been supported by the POSCO Science Fellowship of POSCO TJ Park Foundation and the National Research Foundation of Korea(NRF) grant (No. 2019R1F1A1058486).

\section{Preliminaries}\label{sec:prelim}

In this section, we collect the notations, conventions, and recall the relevant main results of the companion work. 

\subsection*{Notations and definitions}
 
\begin{itemize} 
	\item For $\theta \in [0,2\pi)$, we let $R_{\theta}$ be the matrix of counterclockwise rotation around the origin by the angle $\theta$. We say that a set $\Omega \subset \mathbb{R}^2$ is $m$-fold (rotationally) symmetric if $R_{2\pi/m}(\Omega) = \Omega$. Similarly, a scalar function $f : \mathbb{R}^2 \rightarrow \mathbb{R}$ is $m$-fold symmetric if $f(x) = f(R_{2\pi/m}x)$ for any $x \in \mathbb{R}^2$. On the other hand, a vector field $v : \mathbb{R}^2 \rightarrow \mathbb{R}^2$ is $m$-fold symmetric if $v(R_{2\pi/m}x) = R_{2\pi/m} v(x)$.
	
	\item Given two angles $0 \le \theta_1 < \theta_2 < 2\pi$, we define the sector  \begin{equation*}
	\begin{split}
	S_{\theta_1,\theta_2} = \{ (r,\theta) : \theta_1 < \theta < \theta_2 \}.
	\end{split}
	\end{equation*}
	
	%\item Given a vector $f = (f_1,f_2)$, we denote the counterclockwise $90^\circ$ rotation by $f^\perp = (-f_2,f_1)$. Similarly, $\nabla^\perp \phi = (-\pr_2\phi, \pr_1\phi)$ for a scalar function $\phi : \mathbb{R}^2 \rightarrow \mathbb{R}$. 
	
	\item The classical H\"older spaces are defined as follows: for $0 < \alpha \le 1$ and an open set $U \subset \mathbb{R}^2$, \begin{equation*}
	\begin{split}
	\V f \V_{C^\alpha(\overline{U})} &= \V f\V_{L^\infty(U)} + \V f\V_{{C}_*^\alpha(\overline{U})} \\
	&= \sup_{x \in U} |f(x)| + \sup_{x \ne x'} \frac{|f(x) - f(x')|}{|x-x'|^\alpha}.
	\end{split}
	\end{equation*} % We shall often use the ``inf norm'', defined by \begin{equation*}
	%\begin{split}
	%\V f \V_{\inf(F)} = \inf_{x \in F} |f(x)|. 
	%\end{split}
	%\end{equation*}
	
	\item By a $C^{1,\alpha}$-diffeomorphism, we mean a uniformly bi-Lipschitz map $\Psi : \mathbb{R}^2 \rightarrow \mathbb{R}^2$ satisfying $\nabla\Psi, \nabla(\Psi^{-1})  \in C^{\alpha}(\mathbb{R}^2)$. 
	
	%\item We say that $\omega$ is a patch if it is a characteristic function on some (open) set $\Omega \in \mathbb{R}^2$. It will be assumed that the boundary $\partial\Omega$ is either a Jordan curve, or a union of a few Jordan curves intersecting only at the origin. We often identify the function $\omega$ with the set $\Omega$. 
	
	\item We reserve the letter $K$ for the Biot-Savart kernel \begin{equation*}
	\begin{split}
	K(x) = \frac{1}{2\pi} \frac{x^\perp}{|x|^2}. 
	\end{split}
	\end{equation*}  Convolution against $\nabla K$ is defined in the sense of principal value integration. 
	%\item For functions depending on time and space, we write $f(t,\cdot) = f_t(\cdot)$. The latter notation is not to be confused with the partial derivative in time, which we always denote as $\pr_t$. 
	%\item The flow $\Phi$ is defined as a map $[0,\infty) \times \mathbb{R}^2 \rightarrow \mathbb{R}^2$. For each fixed $t \ge 0$, $\Phi(t,\cdot) = \Phi_t$ is a homeomorphism of $\mathbb{R}^2$ whose inverse is denoted by $\Phi_t^{-1}$.
	\item A point in $\mathbb{R}^2$ is denoted by $x = (x_1,x_2)$ or by $y = (y_1,y_2)$. Often we slightly abuse notation and consider polar coordinates $(r,\theta)$, where $r = |x|$ and $\theta = \arctan(x_2/x_1)$. 
	\item Given $x \in \mathbb{R}^2$ and $r > 0$, we define $B_x(r) = \{ y  \in \mathbb{R}^2 : |x-y| < r \}$.

\end{itemize}

In the following we shall fix some value of $0<\alpha<1$ (a specific choice of $\alpha$ does not make any differences) and suppress the dependence of constants on $\alpha$. Moreover, with the exception of Section \ref{sec:rot}, the strength of the vorticity will be normalized to be 1.

\subsection*{Main well-posedness results from \cite{SVP1}}

The main result of \cite{SVP1} states that for a patch with ``corners'' meeting symmetrically at a point (which can be taken to be the origin without loss of generality), the uniform H\"older regularity up to the corner propagates globally in time. Moreover, the angles of the corners and the region between corners satisfy a closed system of ODEs. We shall encode the uniform regularity with a $C^{1,\alpha}$-diffeomorphism $\Psi : \mathbb{R}^2 \rightarrow \mathbb{R}^2$ satisfying $\Psi(\bf{0}) = \bf{0}$ and $\nabla\Psi|_{\bf{0}} = \mathrm{Id}$. These properties will be always assumed whenever we use the term ``$C^{1,\alpha}$-diffeomorphism''. By a corner, we mean a region which can be (locally) mapped from the exact sector by a $C^{1,\alpha}$-diffeomorphism. 

\begin{customthm}{A}\label{mainthm:wellposedness}
	Consider $\omega_0 = {\bf{1}}_{\Omega_0}$ where $\Omega_0$ satisfies the following properties:  $\Omega_0$ is $m$-fold rotationally symmetric around the origin with $m \ge 3$; the boundary $\partial\Omega_0$ is $C^{1,\alpha}$-smooth away from the origin; and there exists some $r_0>0$ and a $C^{1,\alpha}$-diffeomorphism $\Psi_0$ such that \begin{equation}\label{eq:local_expression} 
		\begin{split}
		& \Psi_0(\Omega_0) \cap B_{\bf{0}}(r_0) =  \left(\bigcup_{k = 0}^{m-1} \bigcup_{i = 1}^N S_{ \beta_{i,0} + \frac{2\pi k}{m}, \beta_{i,0}  + \zeta_{i,0} + \frac{2\pi k}{m} }  \right)\cap B_{\bf{0}}(r_0)
		\end{split}
		\end{equation} where the sectors $S_{ \beta_{i,0} + \frac{2\pi k}{m}, \beta_{i,0} + \zeta_{i,0} + \frac{2\pi k}{m} }$ do not intersect with each other for $1 \le i \le N$ and $0 \le k \le m-1$.  
	
	Then, the corresponding patch solution $\Omega(t)$ enjoys the same properties for all $t > 0$, with some $C^{1,\alpha}$-diffeomorphism $ \Psi(t)$ and $r(t) > 0$; that is, \begin{equation*} 
	\begin{split}
	& \Psi(t)(\Omega(t)) \cap B_{\bf{0}}(r(t)) =  \left(\bigcup_{k = 0}^{m-1} \bigcup_{i = 1}^N S_{ \beta_{i}(t) + \frac{2\pi k}{m}, \beta_{i}(t) + \zeta_{i}(t) + \frac{2\pi k}{m} }  \right)\cap B_{\bf{0}}(r(t)) 
	\end{split}
	\end{equation*} for some non-overlapping sectors $S_{ \beta_{i}(t) + \frac{2\pi k}{m}, \beta_{i}(t) + \zeta_{i}(t) + \frac{2\pi k}{m} }$. 
	
	Moreover, the set of angles $\{ \beta_i,\zeta_i \}_{i=1}^N$ is determined for all $t > 0$ completely by the initial conditions $\{\beta_{i,0},\zeta_{i,0}\}_{i=1}^N$: introducing for convenience $\gamma_{i+\frac{1}{2}}(t):= \beta_{i+1}(t) - \beta_i(t) - \zeta_i(t) $ (separation angles), we have the following system: \begin{equation}\label{eq:ode0}
	\begin{split}
	\frac{d \beta_1}{dt}   =C_m' \sum_{i=1}^N \sin(\frac{m}{4}(2\beta_i + \zeta_i)) \sin(\frac{m}{4}\zeta_i ) - C_m'' \sum_{i=1}^N \zeta_i ,
	\end{split}
	\end{equation} \begin{equation}\label{eq:ode1}
	\begin{split}
	\frac{d\zeta_j(t)}{dt}  & = C_m  \sin\left(\frac{m}{4}\zeta_j\right)\sum_{l=1}^{N} \mathrm{sgn}(j-l)  \sin\left(\frac{m}{4}\zeta_l\right) \cos\left( \frac{m}{4}\left( 2(\beta_j -\beta_l) +  (\zeta_j - \zeta_l) \right) \right) 
	\end{split}
	\end{equation} and \begin{equation}\label{eq:ode2}
	\begin{split}
	\frac{d\gamma_{j+ \frac{1}{2}}(t) }{dt}& =   C_m \sin\left(\frac{m}{4}\gamma_{j+\frac{1}{2}}\right)   \sum_{l=1}^{N}\mathrm{sgn}(j+\frac{1}{2}-l)  \sin\left(\frac{m}{4}\zeta_l\right) \cos\left( \frac{m}{4}\left( (\beta_{j+1} -\beta_l) +  (\beta_j -\beta_l)  +  (\zeta_j - \zeta_l) \right) \right) 
	\end{split}
	\end{equation} for some constants $C_m, C_m' > 0$ and $C_m'' \ge 0$ depending only on $m$.  
\end{customthm} 

The last condition on $\Omega_0$ in the above says that up to a diffeomorphism, $\Omega_0$ is a union of non-intersecting sectors meeting at the origin locally; for a nice example, see Figure \ref{fig:petal} where $m = 3$. The proof of this result spans Sections 2--4 from \cite{SVP1}, and here we just use it as a black box to deduce interesting dynamics of vortex patches. 

\begin{figure}
	\includegraphics[scale=0.4]{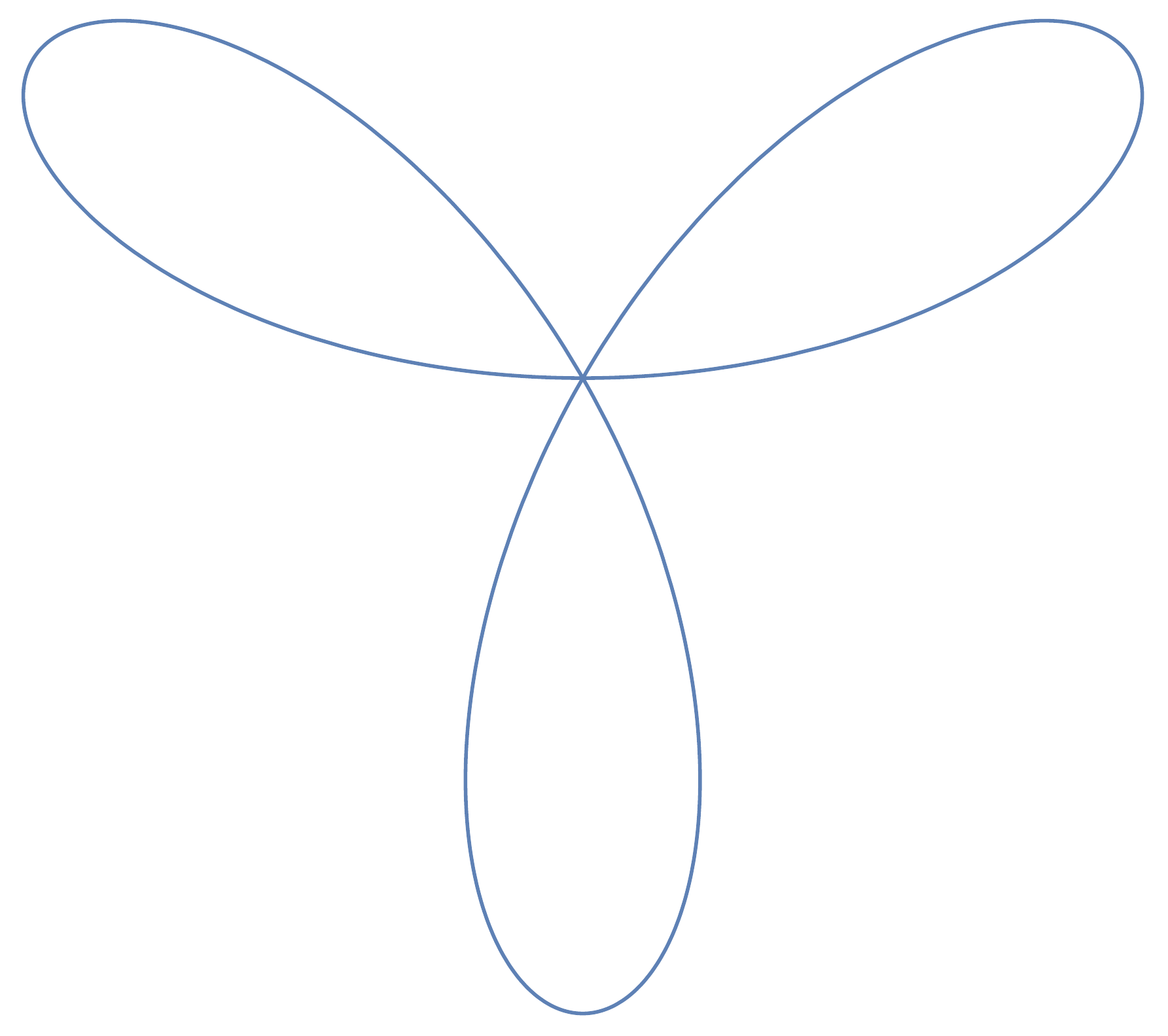} 
	\centering
	\caption{A petal domain}
	\label{fig:petal}
\end{figure} 

\begin{remark}
	A few remarks are in order. 
	\begin{itemize}
		\item Specifying $2N$ variables $\{ \beta_i, \zeta_i \}_{i=1}^N$ is equivalent to specifying $\beta_1, \zeta_1,\cdots, \zeta_N, \gamma_{1+\frac{1}{2}}, \cdots, \gamma_{N-\frac{1}{2}}$, via the relations \begin{equation*}
		\begin{split}
		\beta_j - \beta_l = \left(\gamma_{j-\frac{1}{2}} + \cdots + \gamma_{l+\frac{1}{2}}\right) + \left( \zeta_{j-1} + \cdots + \zeta_l \right), \qquad j > l .
		\end{split}
		\end{equation*}
		\item The absolute constants $C_m, C_m',$ and $C_m''$ can be determined as follows: when the vorticity exactly has the form \begin{equation*}
		\begin{split}
		\omega(r,\theta) = h(\theta)
		\end{split}
		\end{equation*} in polar coordinates with some profile $h$ depending only on the angle satisfying $h(\theta) = h(\theta + \frac{2\pi}{m})$ for some $m \ge 3$, the angular part of the corresponding velocity is simply $2rH(\theta)$ in polar coordinates, where \begin{equation*}
		\begin{split}
		4H + H'' = h. 
		\end{split}
		\end{equation*} The kernel expression for this elliptic problem can be found: \begin{equation*}
		\begin{split}
		H(\theta) = \int_{S^1} (c_m \sin(\frac{m}{2}|\theta - \theta'|) - c_m' )h(\theta')d\theta' 
		\end{split}
		\end{equation*} for some constants $c_m,c_m' $. 
		\item It follows from that the patch boundary $\partial\Omega(t)$ remains uniformly $C^{1,\alpha}$ up to the origin for all times. Indeed, in the proof of Theorem \ref{mainthm:wellposedness}, one has to propagate the following important pieces of information: for any finite $T>0$, $u(t) = K * {\bf{1}}_{\Omega(t)}$ satisfy \begin{equation*} 
		\begin{split}
		& \V \nabla u(t) \V_{L^\infty([0,T];L^\infty(\mathbb{R}^2))} < C(T)
		\end{split}
		\end{equation*} and \begin{equation*} 
		\begin{split}
		& \V \nabla u(t) \V_{L^\infty([0,T];C^\alpha(\overline{\Omega(t)}))} < C(T)
		\end{split}
		\end{equation*} for some constant $C(T)$. That is, the velocity field is globally uniformly Lipschitz and  $C^{1,\alpha}$ in the interior of the patch, uniformly up to the boundary. 
		\item We emphasize that the angles are determined for all $t > 0$ not just up to an overall constant (this fact is expressed in \eqref{eq:ode0}). 
		\item It is not difficult to see from \eqref{eq:ode1} that the sum $\zeta_1 + \cdots + \zeta_N$ is constant in time. This can be also derived from the conservation of circulation along a small loop containing the origin. 
		\item In the simplest case of $N = 1$ in \eqref{eq:local_expression}, the corners rotate with a constant angular speed for all time, which is determined only by the initial angle and $m$. 
		\item While strictly speaking the above result does not allow some (or all) angles to be zero, an analogous global well-posedness result can be shown even in such a setting; see \cite[Section 4.5]{SVP1}. Of course, in that case, angles which are initially zero (i.e. cusp) must remain so for all times. 
		% \item logarithmic spirals 
	\end{itemize}
\end{remark}

\section{Rotating patches}\label{sec:rot}

% recent literature 

We demonstrate in this section that the system \eqref{eq:ode0}--\eqref{eq:ode2} gives rise to a large set of periodic (rotating) patch solutions to the 2D Euler equation. To be clear, we say that a solution $\omega(t)$ to 2D Euler is \textit{rotating} if there exists some nonzero constant $c $ such that $\omega(t,r,\theta) = \omega_0(r, \theta - ct)$. In particular it is a time-periodic solution. This is achieved by considering patches supported on multiple sectors with different strengths. For the simplicity of computations we shall focus on the case $m = 4$. Under this assumption, the sectors can be identified with intervals on $[-\frac{\pi}{4},\frac{\pi}{4})$ with endpoints identified by quotienting out the symmetry. Assume further that the $i$-th interval has strength $A_i$, which is an invariant of the 2D Euler equation. Then, the system of motion takes the following form, modulo a multiplicative constant: 
\begin{equation}\label{eq:ode1-4}
\begin{split}
\frac{d\zeta_j(t)}{dt}  & = \sin\left(\zeta_j\right)\sum_{l=1}^{N} \mathrm{sgn}(j-l) A_l \sin\left(\zeta_l\right) \cos\left( 2(\beta_j -\beta_l) +  (\zeta_j - \zeta_l) \right) 
\end{split}
\end{equation} and \begin{equation}\label{eq:ode2-4}
\begin{split}
\frac{d\gamma_{j+ \frac{1}{2}}(t) }{dt}& =    \sin(\gamma_{j+\frac{1}{2}})   \sum_{l=1}^{N}\mathrm{sgn}(j+\frac{1}{2}-l) A_l \sin\left(\zeta_l\right) \cos\left(   (\beta_{j+1} -\beta_l) +  (\beta_j -\beta_l)  +  (\zeta_j - \zeta_l) \right) . 
\end{split}
\end{equation} 
Further restricting to the case $N = 2$, and by assuming that the angles $\zeta_1, \zeta_2$, and $\gamma_{1+\frac{1}{2}}$ are stationary, we obtain \begin{equation}\label{eq:ode-2-stat}
\left\{
\begin{aligned}
0 & = -A_2 \sin(\zeta_1)\sin(\zeta_2) \cos(2\gamma + \zeta_1 + \zeta_2) \\
0 & = A_1 \sin(\zeta_1)\sin(\zeta_2) \cos(2\gamma + \zeta_1 + \zeta_2) \\
0 & = \sin(\gamma)\left( A_1 \sin(\zeta_1)\cos(\gamma+\zeta_1) - A_2 \sin(\zeta_2) \cos(\gamma+\zeta_2) \right)
\end{aligned}
\right. 
\end{equation} Assuming that $0 < \gamma, \zeta_1, \zeta_2 < \frac{\pi}{2}$ (we also have $\gamma+\zeta_1+\zeta_2<\frac{\pi}{2}$), it is straightforward to see that \eqref{eq:ode-2-stat} holds if and only if \begin{equation}\label{eq:sol} 
\begin{split}
& 2\gamma + \zeta_1+\zeta_2 = \frac{\pi}{2}, \quad A_1 \sin(\zeta_1)\cos(\gamma+\zeta_1) = A_2 \sin(\zeta_2) \cos(\gamma+\zeta_2)
\end{split}
\end{equation} Therefore we have a three-dimensional solution set in the five-dimensional phase space. We have arrived at the following proposition. \begin{proposition}
	In the case $N = 2$, $\omega(r,\theta) = \sum_{i=1}^2 \sum_{k=0}^{3} A_i {\bf 1}_{ \beta_i + \frac{\pi k}{4} }$ defines a rotating solution to 2D Euler if and only if \eqref{eq:sol} holds. We have particular solutions \begin{equation*} 
	\begin{split}
	&\zeta_1 = \frac{\pi}{8} + \xi, \quad \zeta_2 = \frac{\pi}{8} - \xi, \quad \gamma = \frac{\pi}{8},\quad A_1 = \sin(\frac{\pi}{8} - \xi)\cos(\frac{\pi}{4}-\xi), \quad  A_2 = \sin(\frac{\pi}{8}+\xi)\cos(\frac{\pi}{4}+\xi)
	\end{split}
	\end{equation*} for any $0 \le \xi \le \frac{\pi}{8}$. In particular, there exist rotating patches with zero mean: \begin{equation*} 
	\begin{split}
	& \int_{S^1} \omega \, d\theta = 0. 
	\end{split}
	\end{equation*}
\end{proposition}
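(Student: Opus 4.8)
The plan is to read the characterization directly off the reduced equations \eqref{eq:ode1-4}--\eqref{eq:ode2-4}, then produce the explicit solutions by substitution into \eqref{eq:sol}, and finally tune the strengths so that the angular mean vanishes.

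The first step is the reduction "rotating $\Leftrightarrow$ frozen shape." Since $\omega = h(\theta)$ is homogeneous of degree $0$, the evolution is self-similar and rotation-covariant, and Theorem~\ref{mainthm:wellposedness} specialized to $m=4$, $N=2$ (which is exactly how \eqref{eq:ode1-4}--\eqref{eq:ode2-4} arise) says the shape is captured by $\zeta_1,\zeta_2,\gamma:=\gamma_{1+\frac12}$ (the remaining separation angle is $\gamma_{2+\frac12}=\frac\pi2-\zeta_1-\zeta_2-\gamma$, hence determined). The profile stays a rotate of $\omega_0$ precisely when these three quantities are constant; in that case rotation-covariance forces the instantaneous angular rate of the boundary rays to be time-independent, so $\omega$ has the form $\omega_0(r,\theta-ct)$ for a single constant $c$ (a priori possibly $0$). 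Thus $\omega$ is a rotating solution iff $\dot\zeta_1=\dot\zeta_2=\dot\gamma=0$ together with $c\ne0$, and substituting $N=2$ into \eqref{eq:ode1-4}--\eqref{eq:ode2-4} and using $\beta_2-\beta_1=\zeta_1+\gamma$ to rewrite the cosine arguments, the stationarity conditions are exactly \eqref{eq:ode-2-stat}.

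Next I would solve \eqref{eq:ode-2-stat} in the stated regime $0<\gamma,\zeta_1,\zeta_2<\frac\pi2$ (which, together with non-overlap of the sectors, also gives $\gamma+\zeta_1+\zeta_2<\frac\pi2$). Since $A_1,A_2,\sin\zeta_1,\sin\zeta_2$ are all nonzero, the first two equations collapse to $\cos(2\gamma+\zeta_1+\zeta_2)=0$, and $2\gamma+\zeta_1+\zeta_2\in(0,\pi)$ pins this to $2\gamma+\zeta_1+\zeta_2=\frac\pi2$; dividing the third equation by $\sin\gamma\ne0$ then yields the strength balance of \eqref{eq:sol}, and the converse implication is immediate. (One also records that every nondegenerate solution of \eqref{eq:sol} is genuinely rotating, i.e.\ $c\ne0$, by a direct evaluation of the common ray speed.) For the explicit family I set $\gamma=\frac\pi8$, hence $\zeta_1+\zeta_2=\frac\pi4$, write $\zeta_1=\frac\pi8+\xi$, $\zeta_2=\frac\pi8-\xi$, and read off from \eqref{eq:sol} that the choice $A_1=\sin\zeta_2\cos(\gamma+\zeta_2)=\sin(\tfrac\pi8-\xi)\cos(\tfrac\pi4-\xi)$ and $A_2=\sin\zeta_1\cos(\gamma+\zeta_1)=\sin(\tfrac\pi8+\xi)\cos(\tfrac\pi4+\xi)$ makes the two sides of the strength balance literally coincide; a one-line check shows that for $\xi\in[0,\frac\pi8]$ every sine and cosine occurring here is $\ge0$, that $\gamma_{2+\frac12}=\gamma>0$ so the eight sectors genuinely fit, and that the rotation is nontrivial.

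Finally, for the zero-mean assertion note that $\int_{S^1}\omega\,d\theta$ is a fixed positive multiple of $A_1\zeta_1+A_2\zeta_2$, so one must arrange $A_1\zeta_1=-A_2\zeta_2$, i.e.\ $A_1$ and $A_2$ of opposite sign. This is the main obstacle: in the nondegenerate regime above one has $\gamma+\zeta_i<\frac\pi2$, so $\cos(\gamma+\zeta_i)>0$, and the strength balance in \eqref{eq:sol} then forces $A_1,A_2$ to share a sign — so no zero-mean example exists there, and one must leave that regime. The way out is the degenerate configuration $\gamma=\gamma_{1+\frac12}=0$ (hence also $\gamma_{2+\frac12}=0$ and $\zeta_1+\zeta_2=\frac\pi2$): then $\sin\gamma=0$ kills the third equation of \eqref{eq:ode-2-stat}, the first two reduce to $\zeta_1+\zeta_2=\frac\pi2$ alone, and $A_1,A_2$ become unconstrained, so the shape is frozen for \emph{any} strengths. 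Taking $A_1=\zeta_2$, $A_2=-\zeta_1$ makes the mean vanish, and it remains only to verify $c\ne0$. For that I would solve $4H+H''=h$ (with $H$ the angular velocity profile from the Remark after Theorem~\ref{mainthm:wellposedness}) piecewise on the two arcs of widths $\zeta_1,\zeta_2$, matching values and first derivatives and imposing the $\frac\pi2$-periodicity coming from the $4$-fold symmetry; this short computation gives the common boundary-ray speed $c=\tfrac12\bigl(A_1\sin^2\zeta_1+A_2\cos^2\zeta_1\bigr)=\tfrac12\bigl(\zeta_2\sin^2\zeta_1-\zeta_1\cos^2\zeta_1\bigr)$, which is nonzero for suitable $\zeta_1$ (e.g.\ $\zeta_1=\frac\pi6$). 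Thus the only genuinely nontrivial computation is this last solve of $4H+H''=h$ on the degenerate configuration; everything else is the algebra of \eqref{eq:ode-2-stat} and elementary trigonometric identities.
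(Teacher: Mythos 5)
Your handling of the first two assertions coincides with the paper's own argument, which is simply the computation displayed just before the proposition: specialize \eqref{eq:ode1-4}--\eqref{eq:ode2-4} to $N=2$, impose stationarity of $\zeta_1,\zeta_2,\gamma$ to obtain \eqref{eq:ode-2-stat}, use $0<\gamma,\zeta_1,\zeta_2$ and $\gamma+\zeta_1+\zeta_2<\frac{\pi}{2}$ to reduce to \eqref{eq:sol}, and read off the particular family from the strength balance. No difference there, and your algebra checks out.

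Where you genuinely diverge is the zero-mean assertion, and the divergence is well motivated. The paper gives no argument for it beyond ``in particular,'' i.e.\ it is meant to be read off the displayed family; but, as you correctly observe, for $0<\gamma$ the relation $2\gamma+\zeta_1+\zeta_2=\frac{\pi}{2}$ forces $\gamma+\zeta_i<\frac{\pi}{2}$, hence $\cos(\gamma+\zeta_i)>0$, hence $A_1$ and $A_2$ share a sign and the mean, a positive multiple of $A_1\zeta_1+A_2\zeta_2$, cannot vanish nontrivially. The only zero-mean member of the displayed family is the endpoint $\xi=\frac{\pi}{8}$, where $A_1=0$ and $\zeta_2=0$ simultaneously, so that $\omega\equiv 0$; your proof thus exposes a real gap in the proposition as written rather than failing to reproduce a proof the paper has. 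Your replacement construction is correct: in the degenerate configuration $\gamma=\gamma_{2+\frac{1}{2}}=0$, $\zeta_1+\zeta_2=\frac{\pi}{2}$, the system \eqref{eq:ode-2-stat} holds for arbitrary strengths, and a direct evaluation of \eqref{eq:sol-H} (equivalently, your piecewise solve of $4H+H''=h$) gives $H(\beta_1)=H(\beta_1+\zeta_1)$ equal to a positive constant times $A_1\sin^2\zeta_1+A_2\cos^2\zeta_1$, so the two boundary rays share a common angular speed; taking $A_1=\zeta_2$, $A_2=-\zeta_1$ then yields zero mean with nonzero rotation for, e.g., $\zeta_1=\frac{\pi}{6}$. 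Two caveats: (i) your example leaves the regime $0<\gamma$ in which the ``if and only if'' clause is formulated, so you are proving a correct variant of the third assertion rather than deducing it from the stated family; (ii) you should add one line justifying that the angle dynamics (or, more simply, the exact ray-speed computation for $0$-homogeneous data) remains valid when a separation angle vanishes --- the remark following Theorem~\ref{mainthm:wellposedness} covers this, and for exact sectors the computation of $H$ is self-contained in any case.
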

The fact that there is a constant speed of rotation with zero mean sounds counter-intuitive. However, one should keep in mind that individual ``fluid particles'' are rotating at their own speed (constant in time), which altogether averages out to zero. 

It seems difficult to completely classify the set of periodic patches especially when the coefficients $A_i$ are allowed and $N $ is large. Still we have the following result which says that the support of a rotating solution cannot be too localized.  We recall that  (see \cite{EJ1,SVP1}), the solution of $$4H + H''= h$$ is simply given by \begin{equation}\label{eq:sol-H}
\begin{split}
H(\theta) = \frac{\pi}{8}\int_{-\frac{\pi}{4}}^{\frac{\pi}{4}} \left|\sin(2(\theta-\theta'))\right| h(\theta')d\theta'.
\end{split}
\end{equation} and that $2H$ is the speed of rotation. (Of course, the exact constant $\frac{\pi}{8}$ does not make a crucial difference.)
\begin{proposition}
	Let $\mathcal{I}$ be a disjoint union of intervals contained in $[-\frac{\pi}{8},\frac{\pi}{8}]$ and assume that $\omega_0 =   {\bf 1}_\mathcal{I} $ defines a rotating solution. Then $\omega_0 = {\bf 1}_{[b,a]}$ for some $-\frac{\pi}{8} \le b \le  a \le \frac{\pi}{8}$. 
\end{proposition}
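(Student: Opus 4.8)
The plan is to convert the rotating hypothesis into a pointwise condition on the angular potential $H$ from \eqref{eq:sol-H}, and then exploit the piecewise structure of $H$ together with a length count.

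First I would record what ``rotating'' means here. If $\omega_0 = h(\theta) = \mathbf{1}_{\mathcal I}(\theta)$, a rotating solution must be of the form $\omega(t,r,\theta) = \mathbf{1}_{\mathcal I}(\theta-ct) = \mathbf{1}_{\mathcal I + ct}(\theta)$; since $\omega$ is $r$-independent and the angular speed of the flow is $2H$, the transport equation $\partial_t\omega + u\cdot\nabla\omega = 0$ reduces to $\bigl(2H_{\mathcal I+ct}(\theta) - c\bigr)\,\partial_\theta\mathbf{1}_{\mathcal I+ct} = 0$. As $H$ is $C^1$ and $\partial_\theta\mathbf{1}_{\mathcal I+ct}$ is a finite linear combination of Dirac masses at the endpoints of $\mathcal I+ct$, this is equivalent, by the rotational equivariance $H_{\mathcal I+ct}(\cdot) = H_{\mathcal I}(\cdot - ct)$, to the statement that $2H_{\mathcal I} \equiv c$ at every endpoint of every interval making up $\mathcal I$, for a single constant $c\neq 0$. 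Write $\mathcal I = \bigcup_{j=1}^n I_j$ with $I_j = (b_j,a_j)$ maximal of positive length and $-\tfrac\pi8 \le b_1 < a_1 < b_2 < \cdots < a_n \le \tfrac\pi8$. The goal is to show $n = 1$.

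Next I would analyze $H$ arc by arc. Recalling that $h$ is $\tfrac\pi2$-periodic and $4H + H'' = \mathbf{1}_{\mathcal I}$, on each maximal arc on which $\mathbf{1}_{\mathcal I}$ is constant --- namely each $I_j$, each interior gap $(a_j,b_{j+1})$, and the outer gap $(a_n, b_1 + \tfrac\pi2)$ --- the function $H$ equals a fixed particular constant ($\tfrac14$ on the $I_j$'s, $0$ on the gaps) plus a linear combination of $\cos 2\theta$ and $\sin 2\theta$. The crucial elementary fact is that each such arc has length $\ell \in (0,\tfrac\pi2)$: the $I_j$'s and interior gaps lie inside an interval of length $\tfrac\pi4$, while the outer gap has length $\tfrac\pi2 - (a_n - b_1) \in [\tfrac\pi4, \tfrac\pi2)$. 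Since $\sin 2\ell \neq 0$, the two conditions $H = \tfrac c2$ at the arc's endpoints determine $H$ uniquely on it, and one checks $H$ must equal $\tfrac c2 + A(\cos 2s - \cos\ell)$ in the arc-centered angle $s$, with $A = \tfrac{2c-1}{4\cos\ell}$ on an $I_j$ and $A = \tfrac{c}{2\cos\ell}$ on a gap. Imposing continuity of $H'$ at the $2n$ breakpoints then forces: (i) the amplitudes $A$ to alternate in sign around the circle --- so that all interval pieces share one sign and all gap pieces the opposite --- or else all vanish, the latter giving $H \equiv \tfrac c2$ and hence $\mathbf{1}_{\mathcal I} \equiv 2c$, which is impossible as $\mathcal I$ is a nonempty proper subarc of the period; and (ii) comparing the sign of $A$ on an interval piece, namely $\mathrm{sgn}(2c-1)$, with that on a gap piece, namely $\mathrm{sgn}(c)$, yields $\mathrm{sgn}(2c-1) = -\mathrm{sgn}(c)$, so $0 < c < \tfrac12$.

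Finally I would track magnitudes in the $H'$-matching. Equating one-sided derivatives of $H$ at the breakpoints $a_j$ and $b_j$ and substituting the formulas for $A$ gives, for every $j$ (indices taken mod $n$), $(1-2c)\tan\ell^I_j = 2c\tan\ell^G_j = 2c\tan\ell^G_{j-1}$, where $\ell^I_j$ is the length of $I_j$ and $\ell^G_j$ that of the $j$-th gap. Since $c\neq 0$ and $1-2c\neq 0$, all gaps share a common length $g$ and all intervals a common length $p > 0$; as the $2n$ arcs tile the period, $n(p+g) = \tfrac\pi2$. But the outer gap has length $\ge \tfrac\pi4$ because $\mathcal I \subset [-\tfrac\pi8,\tfrac\pi8]$, so $g \ge \tfrac\pi4$, hence $p + g = \tfrac{\pi}{2n} > \tfrac\pi4$, forcing $n = 1$. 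Thus $\mathcal I$ is a single interval $[b,a] \subset [-\tfrac\pi8,\tfrac\pi8]$, as claimed. I expect the first step to be the most delicate one: making rigorous that the rotating hypothesis is exactly the pointwise condition $2H_{\mathcal I} = c$ on the endpoints of $\mathcal I$, and cleanly disposing of the degenerate values $c \in \{0,\tfrac12\}$ and degenerate sub-intervals; the remainder is an elementary, if bookkeeping-heavy, patching of solutions of $4H + H'' = \mathrm{const}$.
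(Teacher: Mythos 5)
Your argument is correct, but it takes a genuinely different route from the paper's. The paper also starts from the observation that a rigidly rotating union of sectors forces the angular speed $2H$ to agree at interval endpoints, but it then uses only two of these conditions: taking $a$ extremal with $\mathcal{I}\subset[-a,a]$ and $I=[b,a]$ the component containing $a$, it plugs directly into the kernel formula \eqref{eq:sol-H}, notes that the contribution of $I$ itself to $H(a)-H(b)$ cancels by symmetry, and that for every $\theta\in\mathcal{I}\setminus I$ one has $0< b-\theta<a-\theta\le 2a\le\tfrac{\pi}{4}$ where $\sin(2\cdot)$ is strictly increasing, so $H(a)-H(b)>0$ unless $\mathcal{I}=I$. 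That is a three-line monotonicity argument which needs no finiteness assumption on the number of components and no global bookkeeping. Your proof instead solves $4H+H''=h$ arc by arc, matches $H$ and $H'$ at the breakpoints, and finishes with a tiling count $n(p+g)=\tfrac{\pi}{2}$ against the outer gap being at least $\tfrac{\pi}{4}$; I checked the details and they go through (the endpoint data determine each arc since all arc lengths lie in $(0,\tfrac{\pi}{2})$ where $\sin 2\ell\ne 0$, and the sign and magnitude matching are as you say, up to an immaterial constant: the relation should read $\tfrac{1-2c}{4}\tan\ell^I_j=\tfrac{c}{2}\tan\ell^G_j$). What your longer route buys is extra structure the paper's proof does not give: the admissible rotation speed is pinned to $0<c<\tfrac12$ in this normalization, and any rotating configuration of equal-strength sectors must have all sectors of one common width and all gaps of another --- which is exactly the pattern realized by the sharp example ${\bf 1}_{[-\pi/8-\epsilon,-\pi/8+\epsilon]}+{\bf 1}_{[\pi/8-\epsilon,\pi/8+\epsilon]}$ in the remark following the proposition, so your method would also classify rotating solutions with larger support. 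The two caveats are minor: your argument tacitly assumes finitely many components (the paper's does not), and, as you note yourself, the reduction of ``rotating'' to the pointwise condition $2H=c$ at the endpoints deserves a sentence of justification (the angular speed $2H(\theta)$ is independent of $r$, so boundary rays map to rays and must all advance at the common speed $c$); the paper relies on the same fact implicitly.
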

\begin{proof}
	Consider the maximal value of $0 \le a \le \frac{\pi}{8}$ where $\mathcal{I} \subset [-a,a]$. Without loss of generality, we may assume that $\bar{\mathcal{I}}$ contains the point $a$, and let $I = [b,a]$ be the connected component of $\mathcal{I}$ containing $a$. Then, from \eqref{eq:sol-H} we obtain that \begin{equation*}
	\begin{split}
	H(a) - H(b) = \frac{\pi}{8} \int_{\mathcal{I} \backslash I} \sin(2(a-\theta)) - \sin(2(b-\theta)) d\theta 
	\end{split}
	\end{equation*} and note that $\sin(2x)$ is strictly increasing for $x \in [0,\frac{\pi}{4}]$. From the assumption on $\mathcal{I}$, $0 < a-\theta < \frac{\pi}{4} $ for all $\theta \in \mathcal{I}\backslash I$, and therefore $H(a) - H(b) = 0$ if and only if $\mathcal{I} = I$. 
\end{proof}
\begin{remark}
	The support condition is sharp; when the support is allowed to lie on $[-\frac{\pi}{8}-\epsilon, \frac{\pi}{8}+\epsilon]$ (say), then $\omega = {\bf 1}_{[-\frac{\pi}{8}-\epsilon,-\frac{\pi}{8}+\epsilon]} + {\bf 1}_{[\frac{\pi}{8}-\epsilon,\frac{\pi}{8}+\epsilon]}$ defines a rotating solution. 
\end{remark}
\begin{remark} The above argument covers the case when $\mathcal{I} = \cup_{i=1}^N I_i$ and $\omega = \sum_{i=1}^N A_i{\bf 1}_{I_i} $ with $A_i > 0$. 
	Moreover, one can show along the same lines that there is no time-periodic solution (up to rotation) whose support (modulo 4-fold symmetry) is contained in an interval of size $\frac{\pi}{4}$ for all time. 
\end{remark}

In the next section, we investigate the dynamics under the same assumption that $m = 4$ and $N = 2$. It turns out that any initial data converges to a rotating solution. As we shall see, in the special case $A_1 = A_2$, two angles generically converge to a single angle as $t \rightarrow +\infty$ which implies cusp formation. 

\section{Cusp formation in infinite time}\label{sec:cusp}

%\subsection{A case study: 8 sectors with 4-fold symmetry}

To demonstrate that there is some non-trivial long time dynamics of sectors, we consider the case when there are two sectors in a fundamental domain, assuming 4-fold rotational symmetry. Let us define $\Omega_1 \subset [0,\frac{\pi}{2})$ be the set obtained from $\Omega$ by quotienting out the rotational symmetry, and further write $\Omega_1 = \Omega_1^1 \cup \Omega_1^2$ (see Figure \ref{fig:two_corners}), and in this case, up to a rotation of $\mathbb{R}^2$, we only need to specify three angles; angles of each $\Omega_1^j$ and the angle in between. We denote them by $\zeta_1$, $\zeta_2$, and $\gamma$, respectively (see Figure \ref{fig:two_corners}). Then, the systems of equations \eqref{eq:ode1}, \eqref{eq:ode2} reduce to, up to a multiplicative constant which we neglect, \begin{equation}\label{eq:ode_reduced}
\left\{
\begin{aligned}
\dot{\zeta_1} &= - \sin(\zeta_1) \sin(\zeta_2) \cos(2\gamma + \zeta_1 + \zeta_2), \\
\dot{\zeta_2} &= \sin(\zeta_2)\sin(\zeta_1) \cos(2\gamma + \zeta_1 + \zeta_2), \\
\dot{\gamma} &= \sin(\gamma) \sin(\zeta_1 - \zeta_2) \cos(\gamma + \zeta_1 + \zeta_2).
\end{aligned}
\right.
\end{equation}  (Observe that $\zeta_1 + \zeta_2$ is an invariant of motion, as it should be). 

\begin{figure}
	\includegraphics[scale=1.0]{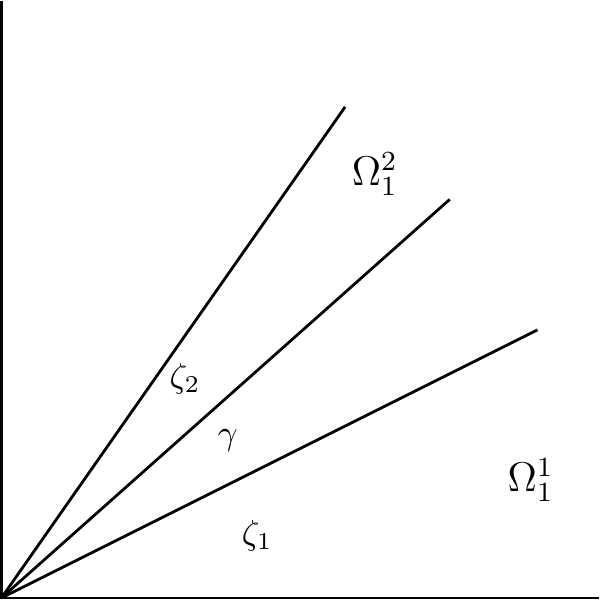} 
	\centering
	\caption{Evolution of two angles}
	\label{fig:two_corners}
\end{figure}

It can be easily shown that for exact infinite sectors, the whole patch $\Omega$ defines a purely rotating state if and only if $\zeta_2 = \zeta_1$ and $\gamma = \frac{\pi}{4} - \zeta_1$ (in which case $\Omega$ is indeed 8-fold symmetric) or one of the sectors is degenerate, that is, either $\zeta_2 = 0$ or $\zeta_1 = 0$. In this restricted setting, it can be established that any state converges as $ t\rightarrow +\infty$ to such a purely rotating space, and generically to a state where one of the angles become zero. 

Indeed, fix $\zeta_1(0) + \zeta_2(0) = \frac{\pi}{4}$, and moreover $\gamma(0) = \zeta_2(0)$. Then using the system \eqref{eq:ode_reduced}  one sees that $\gamma = \zeta_2 $ for all time. Alternatively, assuming such an initial data, subtracting $-\frac{1}{2}$ from vorticity everywhere in the plane gives an odd configuration with respect to the line separating $\zeta_2$ and $\gamma$ (see Figure \ref{fig:two_corners}). Since the Euler equations preserve odd symmetries of vorticity, it follows that $\gamma= \zeta_2$ for all time. Therefore, the one-dimensional system we get is: \begin{equation}\label{eq:ode_1D}
\begin{split}
\dot{\gamma} = \sin(\gamma)\sin(\frac{\pi}{4} - 2\gamma) \cos(\gamma + \frac{\pi}{4}). 
\end{split}
\end{equation} On the other hand, if we keep the assumption $\zeta_1(0)+ \zeta_2(0) = \frac{\pi}{4}$ but now take $\gamma(0) = \zeta_1(0)$, we instead obtain \begin{equation}\label{eq:ode_1D_prime}
\begin{split}
\dot{\gamma} = -\sin(\gamma)\sin(\frac{\pi}{4} - 2\gamma) \cos(\gamma + \frac{\pi}{4}). 
\end{split}
\end{equation} Then, one sees that for any initial value $0 < \gamma(0) < \frac{\pi}{4}$, it always converges to $\pi/8$ for $t \rightarrow + \infty$ in the former case, and either $0$ to $\frac{\pi}{4}$ in the latter. Actually, under the constraint $\zeta_1 + \zeta_2 = \frac{\pi}{4}$, the former case is the only situation where the forward asymptotic state is 8-fold symmetric, not just 4. See a plot of the phase portrait in Figure \ref{fig:phase_portrait}. 

\begin{figure}
	\includegraphics[scale=0.4]{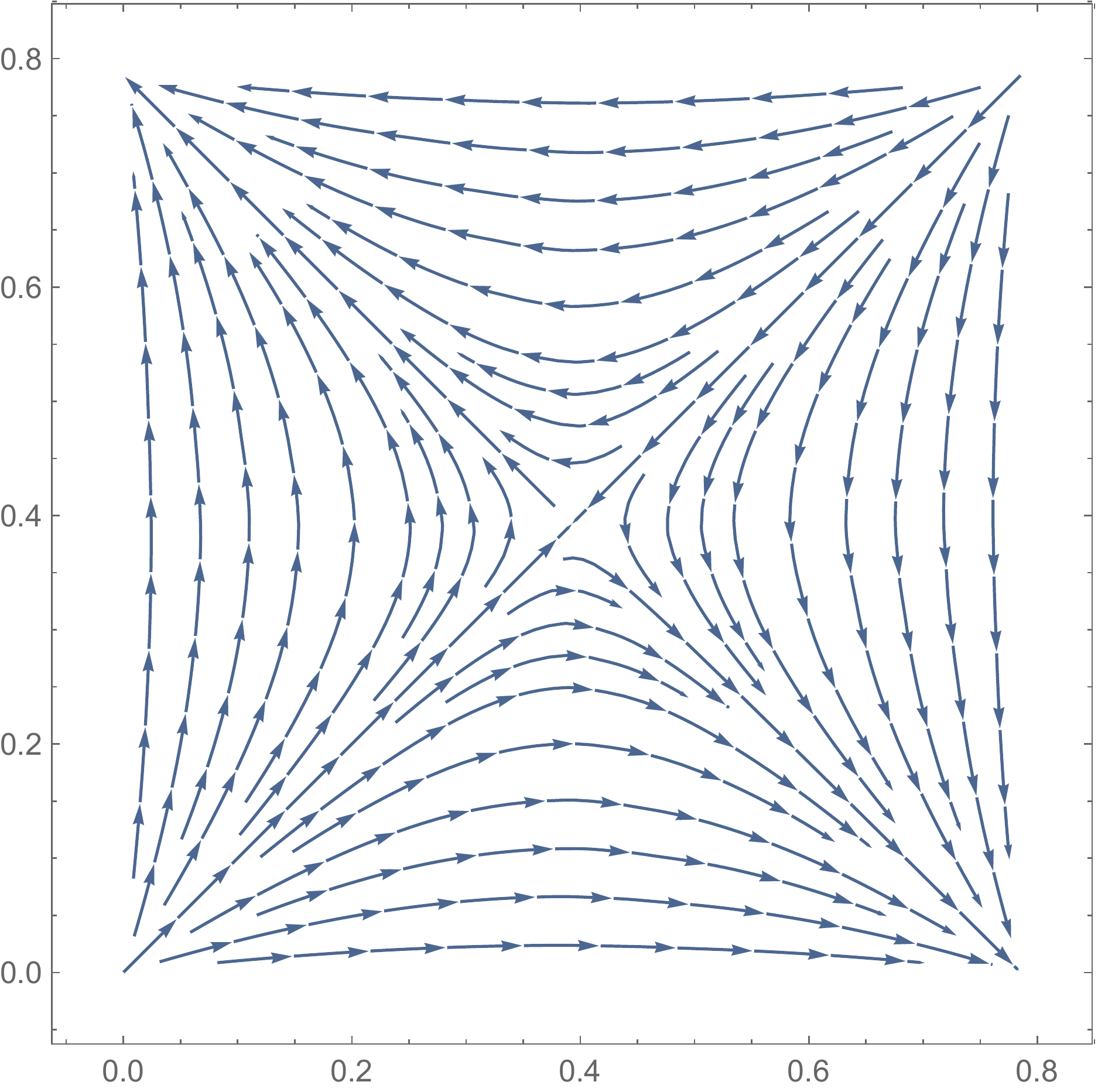} 
	\centering
	\caption{The phase portrait of the system \eqref{eq:ode_reduced} under the assumption $\zeta_1 = \frac{\pi}{4} - \zeta_2$. The axis correspond to variables $\zeta_2$ and $\gamma$ taking values in $[0,\frac{\pi}{4}]$, which play symmetric roles.}
	\label{fig:phase_portrait}
\end{figure}

Thanks to Theorem \ref{mainthm:wellposedness}, we have shown the following \begin{theorem}\label{thm:cusping}
	Assume that $\Omega_0$ is a 4-fold symmetric $C^{1,\alpha}$-patch with two angles $0 < \zeta_1(0)$ and $0 < \zeta_2(0)$ separated by an angle $0 < \gamma$, in a fundamental domain. Further assume that $\zeta_1(0) + \zeta_2(0) = \frac{\pi}{4}$ and $\gamma(0) \ne \zeta_2(0)$. Then, depending on whether $\zeta_2(0) > \gamma(0)$ or $\gamma(0) > \zeta_2(0)$ holds, we have $\zeta_1(t) \rightarrow 0$ or $\zeta_2(t) \rightarrow 0$, respectively, as $t \rightarrow + \infty$. That is, one of the two components of $\Omega_t$ in each fundamental domain cusps in infinite time. The speed of cusp formation is exponential in time. 
\end{theorem}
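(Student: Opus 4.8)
The plan is to study the reduced system \eqref{eq:ode_reduced} under the standing assumption $\zeta_1+\zeta_2=\frac{\pi}{4}$, which makes it a two-dimensional autonomous flow for $(\zeta_2,\gamma)$ (with $\zeta_1=\frac{\pi}{4}-\zeta_2$) on the square $Q=[0,\frac{\pi}{4}]^2$. Since $\zeta_1(t)\to0$ is the same as $\zeta_2(t)\to\frac{\pi}{4}$, it suffices to show that the trajectory converges to the corner $(\zeta_2,\gamma)=(\frac{\pi}{4},0)$ when $\zeta_2(0)>\gamma(0)$ and to $(0,\frac{\pi}{4})$ when $\gamma(0)>\zeta_2(0)$, and then to read off an exponential rate near those corners.

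The crucial point is that this planar system is integrable. Setting $f(x)=\sin x\,\sin(\frac{\pi}{4}-x)$, one checks the elementary identities $f'(x)=\sin(\frac{\pi}{4}-2x)=\cos(2x+\frac{\pi}{4})$ and $\sin x\cos(x+\frac{\pi}{4})=f(x)$, so that \eqref{eq:ode_reduced} (with $\zeta_1=\frac{\pi}{4}-\zeta_2$) takes the symmetric form $\dot{\zeta_2}=f(\zeta_2)\,f'(\gamma)$, $\dot{\gamma}=f'(\zeta_2)\,f(\gamma)$. Hence $\frac{d}{dt}\log f(\zeta_2)=f'(\zeta_2)f'(\gamma)=\frac{d}{dt}\log f(\gamma)$ on the open square, so $E:=f(\zeta_2)/f(\gamma)$ is a first integral there. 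Because $f$ is positive and strictly unimodal on $(0,\frac{\pi}{4})$, with $f(x)=f(\frac{\pi}{4}-x)$ and its only maximum at $\frac{\pi}{8}$, the level set $\{E=1\}$ is exactly the union of the two diagonals $\{\gamma=\zeta_2\}$ and $\{\gamma=\frac{\pi}{4}-\zeta_2\}=\{\gamma=\zeta_1\}$; these are precisely the invariant curves carrying the one-dimensional dynamics \eqref{eq:ode_1D} and \eqref{eq:ode_1D_prime}, i.e.\ the stable and unstable manifolds of the unique interior equilibrium $(\frac{\pi}{8},\frac{\pi}{8})$. Together with the four (invariant) edges of $Q$, these diagonals cut the open square into four open triangular cells; see Figure~\ref{fig:phase_portrait}.

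Fix initial data with $\gamma(0)\ne\zeta_2(0)$ and $\gamma(0)\ne\zeta_1(0)$. Then the trajectory starts in one cell and, being unable to cross the invariant diagonals or reach the invariant edges, stays there for all time. In each cell one of the two coordinates is strictly monotone: in the cells touching the edge $\{\gamma=0\}$ (resp.\ $\{\gamma=\frac{\pi}{4}\}$) one has $\gamma<\frac{\pi}{8}$ (resp.\ $\gamma>\frac{\pi}{8}$) throughout, so $\mathrm{sgn}\,\dot{\zeta_2}=\mathrm{sgn}\,f'(\gamma)$ is constant; in the cells touching $\{\zeta_2=0\}$ (resp.\ $\{\zeta_2=\frac{\pi}{4}\}$) one has $\zeta_2<\frac{\pi}{8}$ (resp.\ $\zeta_2>\frac{\pi}{8}$), so $\mathrm{sgn}\,\dot{\gamma}$ is constant. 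The bounded monotone coordinate therefore converges, and since the other coordinate remains in a sub-interval on which $f$ is a homeomorphism, the identity $f(\zeta_2)=E\,f(\gamma)$ forces it to converge as well; so the trajectory converges to a point, which must be an equilibrium on the boundary of its cell. The interior equilibrium $(\frac{\pi}{8},\frac{\pi}{8})$ is excluded because $E\ne1$ there, and of the two corner equilibria bounding a given cell exactly one is excluded because the monotone coordinate has moved strictly away from it. What remains is $(\frac{\pi}{4},0)$ for the two cells contained in $\{\zeta_2>\gamma\}$, which gives $\zeta_1(t)\to0$, and $(0,\frac{\pi}{4})$ for the two cells contained in $\{\gamma>\zeta_2\}$, which gives $\zeta_2(t)\to0$. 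The remaining non-generic case $\gamma(0)=\zeta_1(0)$ (where $\gamma(0)\ne\zeta_2(0)$ forces $\gamma(0)\ne\frac{\pi}{8}$) follows directly from \eqref{eq:ode_1D_prime}: there $\gamma=\zeta_1$ for all time and $\gamma$ tends to $0$ or to $\frac{\pi}{4}$ according as $\gamma(0)<\frac{\pi}{8}$ or $\gamma(0)>\frac{\pi}{8}$, which again matches the dichotomy.

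For the rate, once the trajectory is near $(\zeta_2,\gamma)=(\frac{\pi}{4},0)$, write $\zeta_1=\frac{\pi}{4}-\zeta_2$, so that $\dot{\zeta_1}=-\sin\zeta_1\,\sin(\frac{\pi}{4}-\zeta_1)\,\cos(2\gamma+\frac{\pi}{4})$; as $\gamma\to0$ and $\zeta_1\to0$, eventually $\cos(2\gamma+\frac{\pi}{4})$ is bounded below by a positive constant and $\sin(\frac{\pi}{4}-\zeta_1)\ge\frac12$, whence $\dot{\zeta_1}\le-c\,\zeta_1$ for some $c>0$ and Gr\"onwall gives $\zeta_1(t)\le Ce^{-ct}$; the corner $(0,\frac{\pi}{4})$ is handled identically, with $\dot{\zeta_2}=\sin\zeta_2\,\sin(\frac{\pi}{4}-\zeta_2)\,\cos(2\gamma+\frac{\pi}{4})$ and $\cos(2\gamma+\frac{\pi}{4})$ now bounded above by a negative constant since $\gamma\to\frac{\pi}{4}$. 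The main obstacle I anticipate is the global step, namely identifying the $\omega$-limit set of an arbitrary interior trajectory; the first integral $E$ is exactly what makes this tractable, reducing it to a short phase-plane argument (no periodic orbits; convergence to a boundary equilibrium) together with the cheap exclusion of the saddle and of one corner. The linearized rate and the treatment of the degenerate diagonal initial data are then routine.
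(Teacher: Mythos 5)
Your argument is correct, and it is in fact more complete than the one written in the paper. The paper's displayed proof of Theorem \ref{thm:cusping} only carries out the final step (the exponential rate via $\dot{\zeta_1}\approx -C\zeta_1$ once $\zeta_1,\gamma\to 0$, which is identical to your Gr\"onwall computation); the convergence statement itself is justified in the text only by reducing to the one-dimensional ODEs \eqref{eq:ode_1D} and \eqref{eq:ode_1D_prime} on the two invariant diagonals and then appealing to the plotted phase portrait (Figure \ref{fig:phase_portrait}) for general initial data. Your contribution is to make that phase-portrait picture rigorous: the observation that, after the substitutions $f(x)=\sin x\,\sin(\frac{\pi}{4}-x)$, $f'(x)=\cos(2x+\frac{\pi}{4})$, $\sin x\cos(x+\frac{\pi}{4})=f(x)$, the constrained system takes the form $\dot{\zeta_2}=f(\zeta_2)f'(\gamma)$, $\dot{\gamma}=f'(\zeta_2)f(\gamma)$ and hence admits the first integral $E=f(\zeta_2)/f(\gamma)$ is not in the paper, and it is exactly what identifies the diagonals as the full level set $\{E=1\}$ (separatrices of the saddle at $(\frac{\pi}{8},\frac{\pi}{8})$), confines each generic trajectory to one of the four triangular cells, and yields monotonicity of one coordinate there; the conserved quantity then pins down the limit point and excludes the saddle. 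I checked the trigonometric identities and the cell-by-cell sign analysis, including the treatment of the anti-diagonal case $\gamma(0)=\zeta_1(0)$, and they are all correct; the remaining standard facts you invoke (invariance of the edges, a convergent trajectory of an autonomous flow must limit on an equilibrium) are unproblematic. In short: same skeleton as the paper for the rate, but a genuinely sharper and self-contained route to the global convergence, purchased by the integrability observation.
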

\begin{proof}
	We only need to show that the angle collapses with an exponential rate. Without loss of generality, assume that $\zeta_1 \rightarrow 0$ as $t \rightarrow +\infty$. Then, we have $\gamma \rightarrow 0$ as well, and the equation \eqref{eq:ode_reduced}  becomes \begin{equation*}
	\begin{split}
	\dot{\zeta_1} \approx -C\zeta_1 
	\end{split}
	\end{equation*} for some positive constant $C > 0$. This finishes the proof. 
\end{proof}

It is likely that for any finite number of sectors, with constant vorticity, there is asymptotic convergence of a purely rotating state.

\section{Spiral formation in infinite time}\label{sec:spiral}

In this subsection, we construct a patch solution which develops an infinite spiral  as time goes to infinity. To be more precise, the winding number of the boundary of our patch solution around the origin increases at least linearly with time. This will be achieved by combining the well-known stability result for the  circular vortex patch with the persistence of the corner angle (Theorem \ref{mainthm:wellposedness} in the case $N = 1$). The idea is very simple: one may perturb the circular patch near the center so that the patch locally looks like a symmetric union of sectors. Then, an explicit computation shows that the rotation speed at the corner is different from the bulk rotation speed, which forces a spiral to form linearly in time. In the following we make this idea precise. 

%\subsubsection*{Statement of the result}

We begin with the statement of our result. 
\begin{theorem}\label{thm:spiral}
	There exists a vortex patch $\Omega_0$ of compact support, with boundary $C^\infty$ away from the origin, whose solution $\Omega(t)$ spirals linearly in time as $t \rightarrow +\infty$. More precisely, for each $t \ge 0$, there is an injective curve $\gamma(t) \subset \partial\Omega(t)$ whose winding number around the origin is at least $ct$ with some constant $c > 0$ depending only on $\Omega_0$. In particular, for any line $L$ passing through the origin, we have \begin{equation*}
	\begin{split}
	\forall t \ge 0, \quad |\{ x \in \mathbb{R}^2 : x \in \partial\Omega(t) \cap L \} | \ge ct - c' 
	\end{split}
	\end{equation*} for some $c' > 0$. In addition, at least one of the following options hold: (i) the perimeter of the patch goes to infinity:  \begin{equation*}
	\begin{split}
	\limsup_{t \rightarrow \infty} |\partial\Omega(t)| = + \infty, 
	\end{split}
	\end{equation*} (ii) the turns accumulate at the origin: \begin{equation*}
	\begin{split}
	\forall r > 0, \quad \lim_{t \rightarrow \infty} |\{ |x| \le r  : x \in \partial\Omega(t) \cap L \} | = +\infty
	\end{split}
	\end{equation*} for any line $L$ containing the origin.
\end{theorem}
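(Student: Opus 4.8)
The plan is to make rigorous the heuristic of the paragraph preceding the statement: a corner pinned at the origin whose rotation rate $\sigma_c$ differs from the bulk rotation rate $\sigma_b$ of an almost-circular patch must wind the patch boundary around the origin at a rate comparable to $|\sigma_b-\sigma_c|$. Throughout I take $m=4$, $N=1$ and write $\Phi_t$ for the Lagrangian flow map. For the datum, fix $\zeta_1$ slightly below $\pi/2$ and let $\Omega_0$ be a compactly supported, $4$-fold symmetric set, $C^\infty$ away from the origin, which near the origin coincides with $\bigcup_{k=0}^3 S_{\beta_1(0)+\frac{\pi k}{2},\,\beta_1(0)+\zeta_1+\frac{\pi k}{2}}$ and is assembled from four pairwise disjoint (except at ${\bf 0}$), slightly rounded circular sectors of a disk $B_{\bf 0}(\rho)$ — the ``petals'' $W_0,\dots,W_3$, i.e.\ the connected components of $\Omega_0\setminus\{{\bf 0}\}$; since $\pi/2-\zeta_1$ is small we may make $|\Omega_0\triangle B_{\bf 0}(\rho)|$ as small as desired while keeping $\rho$ close to $1$. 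This $\Omega_0$ is admissible for Theorem~\ref{mainthm:wellposedness} with $N=1$, and there $\dot\zeta_1\equiv 0$, so \eqref{eq:ode0} becomes the scalar autonomous equation $\dot\beta_1=C_4'\sin(2\beta_1+\zeta_1)\sin\zeta_1-C_4''\zeta_1$, whose right-hand side has mean $-C_4''\zeta_1\le0$ over $\beta_1\in[0,2\pi)$. Hence $\beta_1$ either converges to a rest point or rotates clockwise on average, and in either case $\beta_1(t)=\sigma_c t+O(1)$ for a constant $\sigma_c\le0$ determined a priori by $\zeta_1$. On the other hand the Rankine vortex ${\bf 1}_{B_{\bf 0}(\rho)}$ has velocity field $\bar u(x)=\tfrac12 x^\perp$ on $B_{\bf 0}(\rho)$, a rigid rotation of rate $\sigma_b=\tfrac12$; thus $\sigma_b-\sigma_c\ge\tfrac12>0$, a gap fixed by the construction.

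Next I convert this gap into uniform Eulerian control on the angular velocity. By $L^1$-stability of the circular patch \cite{WP}, taking $|\Omega_0\triangle B_{\bf 0}(\rho)|$ small enough gives $\sup_{t\ge0}|\Omega(t)\triangle B_{\bf 0}(\rho)|\le\epsilon$ with $\epsilon$ arbitrarily small; splitting the Biot--Savart kernel $K$ at scale $\sqrt\epsilon$ then gives $\sup_{t\ge0}\V u(t)-\bar u\V_{L^\infty(\mathbb{R}^2)}\le C\sqrt\epsilon$. Since $\bar u$ is rigid rotation of rate $\tfrac12$ on $B_{\bf 0}(\rho)$, for any fixed $0<r_1<\rho$ the true flow obeys
\[
\frac{u(t,x)\cdot x^\perp}{|x|^2}\ \ge\ \frac12-\frac{C\sqrt\epsilon}{r_1}\ \ge\ \frac14\ >\ \sigma_c,\qquad t\ge0,\ \ r_1\le|x|<\rho,
\]
once $\epsilon$ is small.

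Now the winding estimate, the crux. Fix $r_2\in(r_1,\rho)$. As $|\Omega(t)|=|\Omega_0|>\pi r_2^2$, $\Omega(t)$ is never contained in $B_{\bf 0}(r_2)$, and since $\Phi_t(W_k)=R_{\pi k/2}\Phi_t(W_0)$ by $4$-fold symmetry and $B_{\bf 0}(r_2)$ is rotation-invariant, \emph{every} petal $\Phi_t(W_k)$ reaches past radius $r_2$, for all $t$. Because $W_0$ is a topological disk and $\Phi_t$ a homeomorphism, $\partial\Phi_t(W_0)$ is a Jordan curve through ${\bf 0}$ that crosses $\partial B_{\bf 0}(r_2)$; let $\gamma(t)\subset\partial\Omega(t)$ be its arc running from ${\bf 0}$ along the corner ray until the first hit of $\partial B_{\bf 0}(r_2)$, so $\gamma(t)$ is simple and $\gamma(t)\subset\overline{B_{\bf 0}(r_2)}$. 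By Theorem~\ref{mainthm:wellposedness} the inner end of $\gamma(t)$ emanates in the direction $\beta_1(t)=\sigma_c t+O(1)$, while its outer end sits on $\partial B_{\bf 0}(r_2)$, where by the previous step the flow rotates at rate $\ge\tfrac14$. Differentiating the winding $w(t)=\tfrac1{2\pi}\int_{\gamma(t)}d\theta$ in time: the inner endpoint contributes $\sigma_c$, the outer endpoint contributes its angular velocity $\ge\tfrac14$, and the non-Lagrangian re-truncation of $\gamma(t)$ at $\partial B_{\bf 0}(r_2)$ contributes only a lower-order term, controlled by the facts that $\Phi_t(W_0)$ has \emph{fixed} area and meets $\{|x|>\rho\}$ in measure $\le\epsilon$. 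This yields $\dot w(t)\ge c_0>0$ with $c_0$ depending only on $\sigma_c$ and $\epsilon$, hence $w(t)\ge ct$ for all $t$. As $\gamma(t)$ is simple this is the desired curve, and since a curve winding $N$ times about the origin meets every line through the origin at least $2N-2$ times, the stated bound on $|\partial\Omega(t)\cap L|$ follows.

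Finally, the dichotomy. The simple arc $\gamma(t)\subset\partial\Omega(t)$ has winding $\ge ct$. If there exist $r_0>0$ and $t_j\to\infty$ along which the winding of $\gamma(t_j)$ inside $B_{\bf 0}(r_0)$ stays bounded, then $\ge ct_j-O(1)$ of its turns occur in $\{|x|>r_0\}$, so $|\partial\Omega(t_j)|\ge|\gamma(t_j)|\ge 2\pi r_0(ct_j-O(1))\to\infty$, which is alternative (i); otherwise the winding of $\gamma(t)$ inside $B_{\bf 0}(r)$ tends to $\infty$ for every $r>0$, and then $|\{|x|\le r:\,x\in\partial\Omega(t)\cap L\}|\ge 2(\text{winding of }\gamma(t)\text{ in }B_{\bf 0}(r))-2\to\infty$ for every line $L$ through the origin, which is alternative (ii). The one genuinely hard point is the uniform-in-time inequality $\dot w\ge c_0$ of the third step: it is not a purely Lagrangian statement, and one must either estimate the re-truncation of $\gamma(t)$ at $\partial B_{\bf 0}(r_2)$ or, working instead with a Lagrangian petal edge, preclude that its outer endpoint drifts radially out of the good annulus $r_1\le|x|<\rho$ — and it is exactly here that the dichotomy earns its keep, since the bad radial-drift scenarios are precisely those forcing conclusion (i) or (ii). The remaining ingredients are routine: a phase-line analysis of one scalar ODE, the standard stability-plus-kernel-splitting estimate, and elementary plane topology.
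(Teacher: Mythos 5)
Your overall strategy (stability of the disk, persistence of the corner, and a winding-number count) is the same as the paper's, but there are two genuine gaps, one in the construction and one at the crux. First, the corner speed. For $N=1$ the corner of half-angle $\theta_0$ rotates counterclockwise at the \emph{constant} positive speed $c_0=\frac{1}{4}(1-\cos(4\theta_0))\in(0,\frac{1}{2})$; this is computed directly from the kernel representation $H(\theta)=\frac{\pi}{8}\int|\sin(2(\theta-\theta'))|h(\theta')d\theta'$ and is consistent with the remark following Theorem \ref{mainthm:wellposedness}. Your averaging of \eqref{eq:ode0} leading to $\sigma_c\le 0$ is therefore not correct, and the error interacts badly with your choice of data: exact (rounded) sectors give $|\Omega_0\triangle B_{\bf 0}(\rho)|\sim \rho^2(\frac{\pi}{2}-\zeta_1)$, so making $\epsilon$ small forces $\zeta_1\to\frac{\pi}{2}$, and then the true rotation gap $\frac{1}{2}-c_0=\frac{1}{2}\cos^2\zeta_1\sim(\frac{\pi}{2}-\zeta_1)^2$ is asymptotically \emph{smaller} than the velocity error $C\sqrt{\epsilon}\sim(\frac{\pi}{2}-\zeta_1)^{1/2}$ supplied by the stability theorem, so the inequality $u^\theta\ge\frac{1}{4}>\sigma_c$ fails. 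The paper's Example \ref{ex:spiral} is designed precisely to decouple these two smallness requirements: the corner half-angle $\theta_0$ is held fixed, so the gap $\frac{1}{2}-c_0$ is of order one, while the petal widens away from the origin to fill the quarter-sector, making the symmetric difference as small as desired.

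Second, and more seriously, the winding estimate. Your curve $\gamma(t)$, re-truncated at each time at the first hit of $\partial B_{\bf 0}(r_2)$, is not materially transported, so $\dot w$ is not the difference of the endpoint angular velocities, and the ``re-truncation term'' is not controlled by the area considerations you cite; you acknowledge this yourself, and the dichotomy (i)/(ii) cannot rescue it, since (i)/(ii) concern the form of the conclusion, not the mechanism producing the winding. The missing idea is the paper's: for each $T$, use incompressibility and a Chebyshev-type argument to produce a material point $\tilde x\in\Omega_0^1$ whose trajectory spends at least an $\eta$-fraction of $[0,T]$ outside $B_{\bf 0}(r_0)$ (if no such point existed, the time-integrated area of $\Omega(t)^1\cap B_{\bf 0}(r_0)$ would be at least $\eta T|\Omega_0^1|$, exceeding the trivial upper bound $\pi r_0^2T/4$ once $|\Omega_0^1|$ is close to $\pi/4$ and $r_0$ is small). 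One then takes the Lagrangian image of a fixed curve joining the origin to $\tilde x$: in the frame rotating with the corner only the outer endpoint moves, so $N[\Phi(T,\gamma(T))]=\int_0^T u^\theta(t,\Phi(t,\tilde x))\,dt\ge \eta T c_1/2 - C(1-\eta)T$, where the bound $|u^\theta|\le C$ off the good set uses the $4$-fold symmetry; a final topological lifting argument transfers the winding from this interior curve to $\partial(\Omega(T)^1)$. Without this device, or a genuine substitute for it, your argument does not close.
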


\begin{example}
	When the domain of the fluid has boundary and a non-trivial fundamental group, spiral formation in infinite time is very easy to achieve. One can simply take the annulus $\mathbb{A} = \{ 1 \le r \le 2 \}$ to be the fluid domain and take a patch $\Omega_0 \subset \mathbb{A}$ which extends to both components of $\partial\mathbb{A}$. It is not difficult to arrange that the velocities on two component of $\partial\mathbb{A}$ are bounded away from each other for all times. This implies that the pieces of $\Omega(t)$ lying on different components of $\partial\mathbb{A}$ will rotate with different angular speeds for all times, which means linear in time spiral formation. Note that in this example the length of the patch boundary trivially goes to infinity as time goes to infinity. This example is essentially due to Nadirashvili \cite{Nad} who observed in this setup infinite in time growth of the vorticity gradient for smooth vorticities (instead of patches). 
\end{example}

\begin{example}
	Even in the case of the trivial fundamental group, when boundary is present, the boundary of a large chunk of vorticity could play the role of an essential curve. For concreteness take the square domain $[0,1]^2$ and place a patch $U_0$ which has area almost equal to 1 and does not touch the boundary of the square. Then now the region $[0,1]^2\backslash U_0$ is homeomorphic to an annulus. Take a patch $\Omega_0 \subset [0,1]^2\backslash U_0 $ whose boundary intersects $\partial([0,1]^2)$ and $\partial U_0$. We demand that the intersection with $\partial([0,1]^2)$ is contained in the segment $[0,1]\times \{ 0\}$ (such an example of $\Omega_0$ is depicted in Figure \ref{fig:spiral_square_initial}). One may add rotated images of $\Omega_0$ around the center $(\frac{1}{2},\frac{1}{2})$ to make the configuration 4-fold symmetric, which adds extra stability of the scenario. One can show that the central patch $U(t) = \Phi(t,U_0)$ (where $\Phi(t,\cdot)$ is the particle trajectory map associated with the initial patch data) rotates around the center infinitely many times as $t \rightarrow \infty$ (cf.  Kiselev-Sverak \cite{KS}). On the other hand, the part of the patch boundary touching $[0,1]\times\{0\}$ simply converges to the corner $(1,0)$. This guarantees spiral formation in infinite time. The length of the boundary of the patch again goes to infinity in this example. 
\end{example}

\begin{figure}
	\includegraphics[scale=0.6]{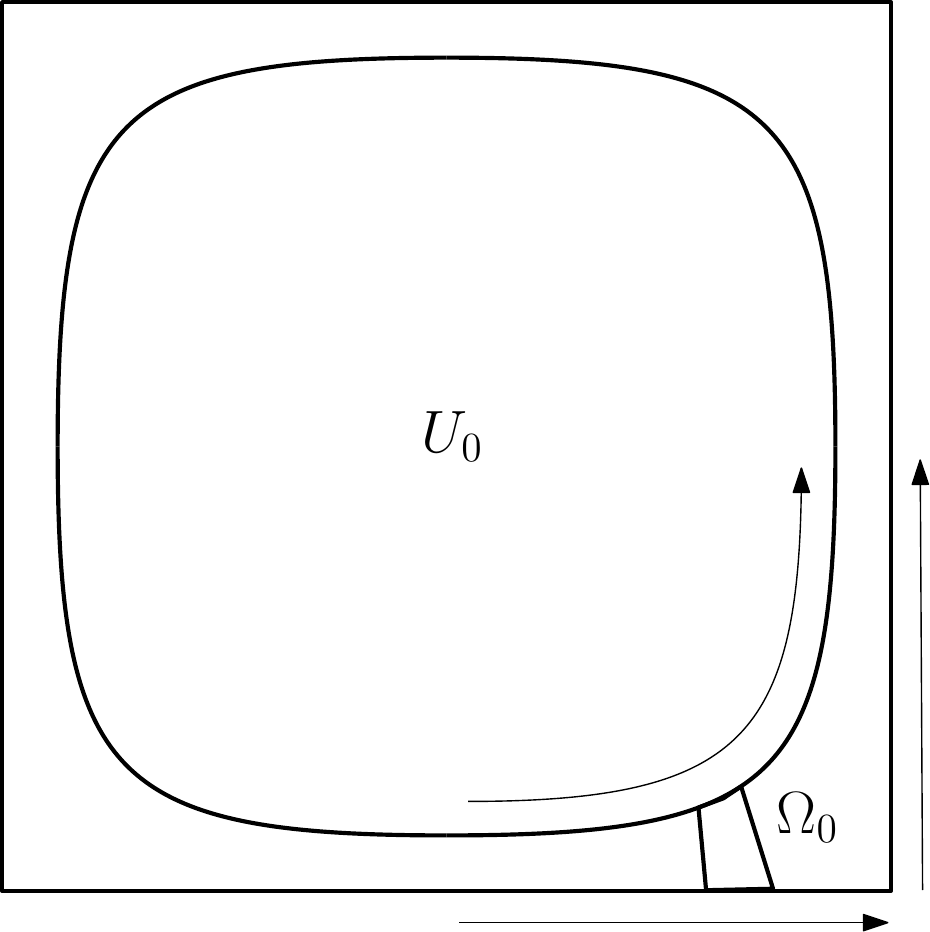} 
	\centering
	\caption{Spiral formation in the case of the square}
	\label{fig:spiral_square_initial}
\end{figure}

In view of the examples above, the main point of Theorem \ref{thm:spiral} is that we can achieve spiral formation in the absence of the boundary of the fluid domain. 

\begin{example}\label{ex:spiral}
	Here we give an explicit example of $\Omega_0$ with which the conclusion of Theorem \ref{thm:spiral} holds. The patch $\Omega_0$ will be given by the 4-fold symmetrization of a patch $\Omega_0^1$ which lies strictly inside the region $\{ |x_2| < x_1 \}$ except at the origin. For simplicity we use polar coordinates to define $\Omega_0^1$. For sufficiently small $\delta, \nu > 0$ and any $0 < \theta_0 < \frac{\pi}{4} - \nu$, take the points \begin{equation*}
	\begin{split}
	A^{\pm} = (\delta, \pm \theta_0), \quad B^{\pm} = (1, \pm(\frac{\pi}{4} - \nu) )
	\end{split}
	\end{equation*} in the $(r,\theta)$-coordinates. Then, draw straight lines between the origin $O$ and $A^+$, and between $A^+$ and $B^+$. Similarly connect $O$ and $A^-$, and $A^-$ and $B^-$ by straight lines. Finally, connect $B^+$ with $B^-$ by an arc which belongs to the unit circle centered at $O$. This gives a closed piecewise smooth curve which is depicted in Figure \ref{fig:spiral_initial}. Then we may smooth out the patch boundary locally near the points $A^\pm$, $B^\pm$ so that in the ball $B(0,\delta/2)$, $\Omega_0^1$ is still a sector and the patch boundary is $C^\infty$-smooth except at $O$. Then, simply set $\Omega_0 = \cup_{j = 0}^3 R_{j\pi/2} \Omega_0^1$. Note that by taking $\delta, \nu \rightarrow 0^+$, the area of  $\Omega_0$ converges to that of the unit circle. We shall use this example in the proof of Theorem \ref{thm:spiral}. 
\end{example}

\begin{figure}
	\includegraphics[scale=0.6]{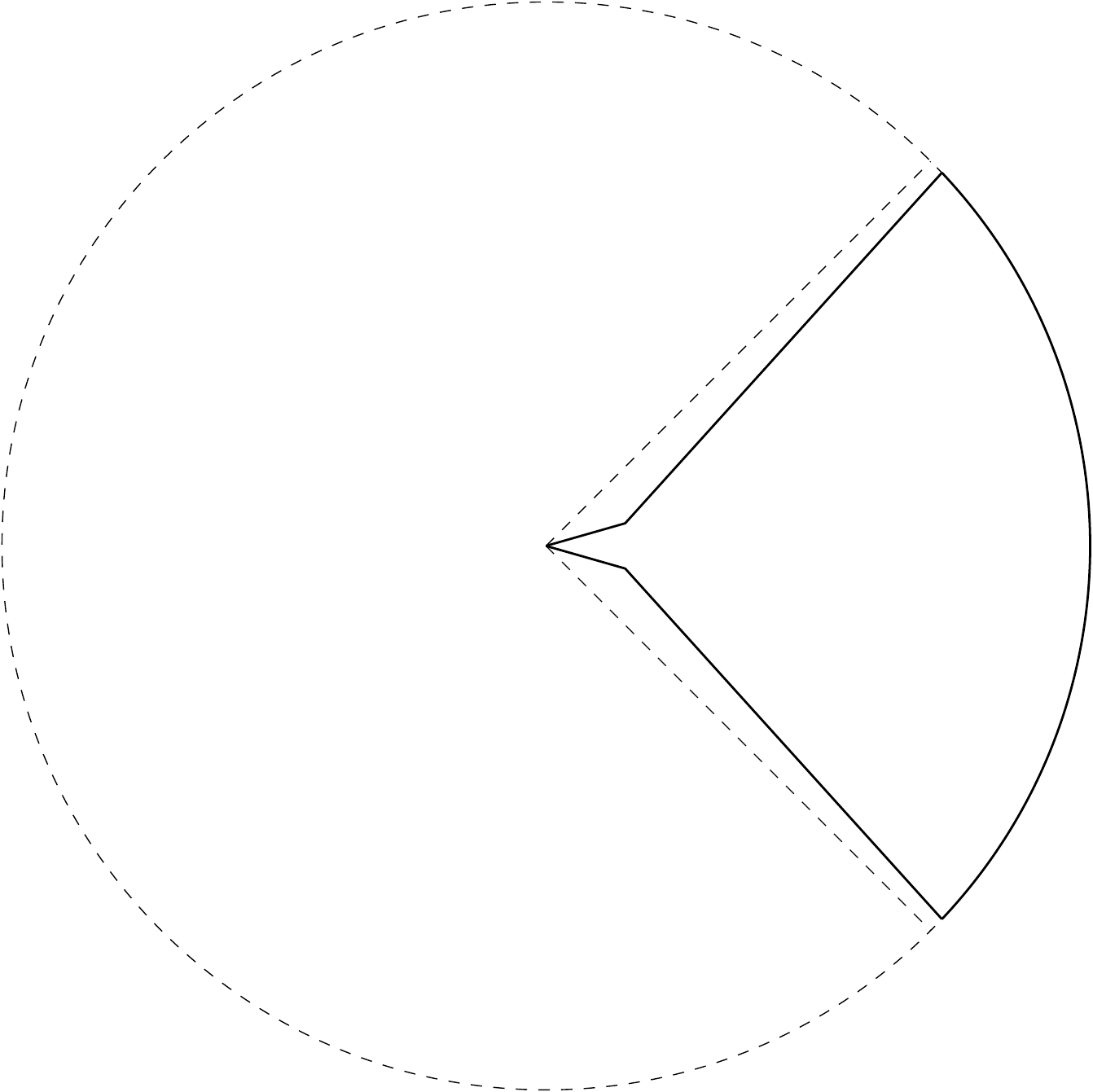} 
	\centering
	\caption{Initial data for the result of Theorem \ref{thm:spiral} (before smoothing)}
	\label{fig:spiral_initial}
\end{figure}

Before we give a proof of Theorem \ref{thm:spiral}, let us briefly review the $L^1$-stability theorem for the circular vortex patch first proved by Wan and Pulvirenti \cite{WP}.\footnote{Strictly speaking, the result in this paper is stated for patches contained inside a ball but the authors mention that the domain can be replaced by $\mathbb{R}^2$.} Here we state the version given later by Sideris and Vega \cite{SiVe}. We also mention classical (but weaker) stability results for the circular patch by Dritschel \cite{Dr2} and Saffman \cite{Saff}. 

\begin{theorem*}[Sideris and Vega \cite{SiVe}]
	Let ${\bf 1}_B$ be the characteristic function supported on the unit ball inside $\mathbb{R}^2$. For any bounded open set $\Omega_0 \subset \mathbb{R}^2$, we have \begin{equation*}
	\begin{split}
	\V {\bf 1}_{\Omega(t)} - {\bf 1}_B \V_{L^1}^2 \le 4\pi \sup_{ \Omega_0 \triangle B } |1 - |x|^2| \V {\bf 1}_{\Omega_0} - {\bf 1}_B\V_{L^1}
	\end{split}
	\end{equation*} for all $t \ge 0$, where $\Omega(t)$ is the patch solution in $\mathbb{R}^2$ associated with initial data $\Omega_0$. 
\end{theorem*}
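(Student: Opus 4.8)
The plan is to isolate a functional that is \emph{exactly} conserved by the patch evolution and that sandwiches the two sides of the claimed inequality: a pointwise sign estimate on one side and a sharp geometric inequality on the other. Write $f(t) = {\bf 1}_{\Omega(t)} - {\bf 1}_B$, so that $f$ takes values in $\{-1,0,1\}$, $|f(t)| = {\bf 1}_{\Omega(t)\triangle B}$, and $\V {\bf 1}_{\Omega(t)}-{\bf 1}_B\V_{L^1} = |\Omega(t)\triangle B|$. The key observation is the pointwise identity
\[
(1-|x|^2)\, f(t,x) = -\,|1-|x|^2|\,{\bf 1}_{\Omega(t)\triangle B}(x),
\]
which holds because $f$ can equal $-1$ only inside $B$, where $1-|x|^2 \ge 0$, and can equal $+1$ only outside $B$, where $1-|x|^2 \le 0$. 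Integrating and peeling off the fixed contribution of $B$ gives
\[
D(t) := \int_{\Omega(t)\triangle B}|1-|x|^2|\,dx = \int_{\mathbb{R}^2}|x|^2\,{\bf 1}_{\Omega(t)}\,dx - |\Omega(t)| + \int_B (1-|x|^2)\,dx .
\]

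First I would show $D(t)$ is constant in time. Two of the quantities on the right are conserved by the patch dynamics: the area $|\Omega(t)|$, since the transport flow is volume preserving, and the angular impulse $\int |x|^2 {\bf 1}_{\Omega(t)}\,dx$. For the latter, using $\partial_t\omega = -\nabla\cdot(u\omega)$ together with $\nabla\cdot u = 0$ yields $\frac{d}{dt}\int|x|^2\omega\,dx = 2\int (x\cdot u)\,\omega\,dx$, and symmetrizing the resulting double integral against the Biot--Savart kernel $K$ shows that the kernel $x\cdot(x-y)^\perp/|x-y|^2$ is antisymmetric under $x\leftrightarrow y$, so the integral vanishes. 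Hence $D(t)=D(0)$ for all $t\ge 0$; in particular $D(t)$ is finite since $\Omega_0$ is bounded.

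Next I would prove the sharp geometric inequality
\[
\int_A |1-|x|^2|\,dx \ \ge\ \frac{1}{4\pi}\,|A|^2
\]
for every measurable $A\subset\mathbb{R}^2$ of finite measure. This is a bathtub/rearrangement statement: since $|1-|x|^2|$ is radial with sublevel sets that are annuli centered on the unit circle, for fixed $|A|$ the integral is minimized by the balanced annulus $A_\lambda = \{\sqrt{1-\lambda}\le |x|\le\sqrt{1+\lambda}\}$. The substitution $s=|x|^2$ then gives $|A_\lambda| = 2\pi\lambda$ and $\int_{A_\lambda}|1-|x|^2|\,dx = \pi\lambda^2 = |A_\lambda|^2/(4\pi)$, which realizes equality; the large regime $|A|>2\pi$ (full-disk sublevel sets) is handled by the same computation and collapses to $(\lambda-1)^2\ge 0$.

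Finally I would combine these with the trivial bound $D(0)\le \big(\sup_{\Omega_0\triangle B}|1-|x|^2|\big)\,|\Omega_0\triangle B|$:
\[
\frac{1}{4\pi}\,|\Omega(t)\triangle B|^2 \ \le\ D(t) \ =\ D(0) \ \le\ \big(\sup_{\Omega_0\triangle B}|1-|x|^2|\big)\,|\Omega_0\triangle B|,
\]
which is exactly the asserted estimate after recalling $\V {\bf 1}_{\Omega}-{\bf 1}_B\V_{L^1} = |\Omega\triangle B|$. The main obstacle is not any single estimate but the rigorous justification of the conservation laws for the \emph{weak} (patch) solution, and above all the vanishing of $\int (x\cdot u)\,\omega\,dx$, which must be read as a symmetry/principal-value argument valid for $L^\infty\cap L^1$ vorticity rather than a naive integration by parts; once this is in hand, the bathtub inequality and its sharp constant $1/(4\pi)$ are elementary, and the sign identity makes the conserved functional $D(t)$ fall out automatically.
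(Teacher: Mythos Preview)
The paper does not prove this theorem; it is quoted from Sideris--Vega \cite{SiVe} and used as a black box in the proof of Theorem~\ref{thm:spiral}. So there is no ``paper's own proof'' to compare against.

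That said, your argument is correct and is in fact the Sideris--Vega proof: the functional $D(t)=\int_{\Omega(t)\triangle B}|1-|x|^2|\,dx$ is exactly conserved because it equals a linear combination of the mass and the second moment of $\omega(t)$ (up to a fixed additive constant), the bathtub inequality $\int_A|1-|x|^2|\,dx\ge |A|^2/(4\pi)$ gives the lower bound, and the trivial sup-estimate gives the upper bound at $t=0$. Your caveat about justifying the conservation of $\int|x|^2\omega\,dx$ at the level of Yudovich solutions is the right thing to flag, though for compactly supported patch data this is standard (the Biot--Savart velocity is log-Lipschitz and bounded, so the formal antisymmetrization of $x\cdot K(x-y)$ is easily made rigorous). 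One minor point: in the bathtub step you should also note that when $|A|\le 2\pi$ the minimizing annulus $A_\lambda$ with $\lambda=|A|/(2\pi)\le 1$ genuinely lies in $\mathbb{R}^2$ (i.e.\ $1-\lambda\ge 0$); your ``large regime'' remark handles $|A|>2\pi$ correctly.
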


We shall now define the notion of winding number for curves satisfying $\gamma : [0,a] \rightarrow \mathbb{R}^2$, $0\notin \gamma((0,a])$,  and $\gamma(0) = 0$ with well-defined tangent vector at $0$. Then, we may consider the continuous map $h_\gamma : (0,a] \rightarrow S^1$ defined for $b \in (0,a]$ by taking the angle of $\gamma(b) \in \mathbb{R}^2$ in polar coordinates. Here, $S^1$ denotes the interval $[0,2\pi]$ with the endpoints identified, so that there is a natural projection map $\pi : \mathbb{R} \rightarrow S^1$. The map $h_\gamma$ has a unique extension (up to an additive constant) to a continuous map $\tilde{h}_\gamma : [0,a] \rightarrow \mathbb{R}$ satisfying $\pi \circ \tilde{h}_\gamma = h_\gamma$. Here, we may define $\tilde{h}_\gamma$ continuously at 0 using the assumption that $\gamma$ has a well-defined tangent vector at $0$. Then, we define the winding number of $\gamma$ by $N[\gamma] = (\tilde{h}_\gamma(a) - \tilde{h}_\gamma(0))/2\pi$.

%Whenever the curve is closed, $\gamma(a) = \gamma(0)$, then $N[\gamma]$ is an integer. As a particular example, for an integer $n$, the curve $\gamma^{(n)}: [0,1] \rightarrow \mathbb{C} \simeq \mathbb{R}^2$ defined by  $\gamma^{(n)}(t) := e^{2\pi int}$ has winding number $n$. 

We shall recall the following elementary \begin{lemma}\label{lem:winding}
	Let $\gamma : [0,a] \rightarrow \mathbb{R}\backslash \{ 0 \}$ be a continuous curve not touching the origin except at $0$ and has a well-defined tangent at 0. Then, for any line $L$ passing through the origin, \begin{equation*}
	\begin{split}
	\left| \{ x : x \in L \cap \gamma([0,a]) \} \right| \ge \lfloor N[\gamma]\rfloor 
	\end{split}
	\end{equation*} where $ \lfloor N \rfloor$ denotes the largest integer not exceeding $N$. 
\end{lemma}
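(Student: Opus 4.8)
The plan is to reduce the statement to a simple version of the intermediate value theorem applied to the lifted angle function $\tilde h_\gamma$. First I would fix a line $L$ through the origin; $L$ corresponds (via the projection $\pi$) to a pair of antipodal points $\{\theta_L, \theta_L + \pi\}$ on $S^1$, i.e.\ to the set $\pi^{-1}(\{\theta_L,\theta_L+\pi\}) = \{\theta_L + k\pi : k \in \mathbb{Z}\} \subset \mathbb{R}$. A point $\gamma(b)$ with $b \in (0,a]$ lies on $L$ if and only if its polar angle $h_\gamma(b)$ equals $\theta_L$ or $\theta_L + \pi$ in $S^1$, which happens precisely when $\tilde h_\gamma(b) \in \theta_L + \pi\mathbb{Z}$. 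So the set whose cardinality we must bound below is exactly $\tilde h_\gamma^{-1}(\theta_L + \pi\mathbb{Z}) \cap (0,a]$.

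Next I would invoke the continuity of $\tilde h_\gamma : [0,a] \to \mathbb{R}$ (which holds by the construction recalled just before the lemma, using the well-defined tangent at $0$ to make $\tilde h_\gamma$ continuous at the endpoint $0$). Write $p = \tilde h_\gamma(0)$ and $q = \tilde h_\gamma(a)$, so that by definition $N[\gamma] = (q-p)/2\pi$. The image $\tilde h_\gamma([0,a])$ is a connected subset of $\mathbb{R}$ containing both $p$ and $q$, hence contains the whole closed interval between them, of length $|q - p| = 2\pi |N[\gamma]|$. The arithmetic progression $\theta_L + \pi\mathbb{Z}$ has gap $\pi$, so any interval of length $2\pi|N[\gamma]|$ contains at least $\lfloor N[\gamma] \rfloor$ of its points (more precisely at least $\lfloor 2|N[\gamma]| \rfloor \ge \lfloor N[\gamma]\rfloor$ points, counting endpoints appropriately; I will use the crude bound $\lfloor N[\gamma]\rfloor$ which is all that is claimed). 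For each such value $v \in \theta_L + \pi\mathbb{Z}$ lying strictly between $p$ and $q$, the intermediate value theorem gives some $b \in (0,a)$ with $\tilde h_\gamma(b) = v$, and distinct values $v$ give points $\gamma(b)$ that are genuinely distinct points of $\mathbb{R}^2$ (they have distinct polar angles modulo $2\pi$, or differ by the antipodal relation but then by distinctness of the lifted value cannot coincide — here one uses $0 \notin \gamma((0,a])$ so the polar angle is well-defined). Collecting these $b$'s shows $|L \cap \gamma([0,a])| \ge \lfloor N[\gamma]\rfloor$.

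The one genuinely delicate point — the main obstacle — is the bookkeeping that distinct values $v$ in the progression $\theta_L + \pi\mathbb{Z}$ really do produce distinct points on the curve, rather than the same point counted twice. Two lifted values $v_1 \ne v_2$ in $\theta_L + \pi \mathbb{Z}$ project to $S^1$ to points differing by a multiple of $\pi$; if that multiple is even they give the same point of $S^1$ hence the same ray from the origin but, since $\gamma$ is merely required to be continuous and may revisit a ray, one must be slightly careful. However, the claim only counts the \emph{set} $\{x : x \in L \cap \gamma([0,a])\}$, and the cleanest way around this is to instead count the pairs: observe that whenever $\tilde h_\gamma$ passes through a full interval of length $\pi$ it must cross the line $L$ (either through angle $\theta_L$ or through angle $\theta_L + \pi$) at a point of $\mathbb{R}^2$ not equal to any previously obtained one, because consecutive crossings at the \emph{same} point would force $\tilde h_\gamma$ to return to the same value, contradicting strict monotone passage through disjoint unit intervals. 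Partitioning $[p,q]$ into $\lfloor |N[\gamma]| \cdot 2 \rfloor \ge \lfloor N[\gamma]\rfloor$ subintervals of length $\pi$ and extracting one new intersection point from each then finishes the argument. This is elementary but worth writing out to make the counting airtight.
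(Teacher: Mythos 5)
Your overall route --- lift the angle to $\tilde h_\gamma : [0,a] \to \mathbb{R}$, observe that its image contains the closed interval between $\tilde h_\gamma(0)$ and $\tilde h_\gamma(a)$, of length $2\pi |N[\gamma]|$, and apply the intermediate value theorem at the points of the progression $\theta_L + \pi\mathbb{Z}$ --- is exactly the argument the paper has in mind (the paper omits the proof, describing it only as a simple application of the intermediate value theorem), and the first two paragraphs of your proposal carry it out correctly. The counting of progression points in the interval is also fine.

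The gap is in your resolution of the ``delicate point.'' You claim that ``consecutive crossings at the same point would force $\tilde h_\gamma$ to return to the same value.'' That is false: a single point $x \in L$ with polar angle $\theta_L$ corresponds to the infinite set of lifted values $\theta_L + 2\pi\mathbb{Z}$, so the curve can cross $L$ at lifted values $v$ and $v + 2\pi$ --- which lie in different length-$\pi$ blocks of your partition --- while landing on the very same point of $\mathbb{R}^2$. Moreover $\tilde h_\gamma$ need not be monotone, so the phrase ``strict monotone passage through disjoint unit intervals'' has no basis. In fact, for a merely continuous curve the set-cardinality statement cannot be rescued this way: a curve that leaves the origin and then traces the unit circle $N$ times has winding number about $N$ but meets the $y$-axis in only two points. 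The lemma is applied in the paper only to \emph{injective} curves (the curve $\gamma(t) \subset \partial\Omega(t)$ in Theorem \ref{thm:spiral} is taken injective), and under injectivity the delicate point evaporates: distinct progression values $v_1 \ne v_2$ give distinct parameters $b_1 \ne b_2$ (since $\tilde h_\gamma(b_1) \ne \tilde h_\gamma(b_2)$), and injectivity of $\gamma$ then forces $\gamma(b_1) \ne \gamma(b_2)$. You should either invoke injectivity explicitly in this way, or restate the conclusion as a count of parameter values $b$ with $\gamma(b) \in L$ (i.e.\ with multiplicity); the monotonicity-based bookkeeping in your last paragraph should be removed.
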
 
We omit the proof, which is a simple application of the intermediate value theorem (cf. Figure \ref{fig:winding}). 

\begin{figure}
	\includegraphics[scale=0.6]{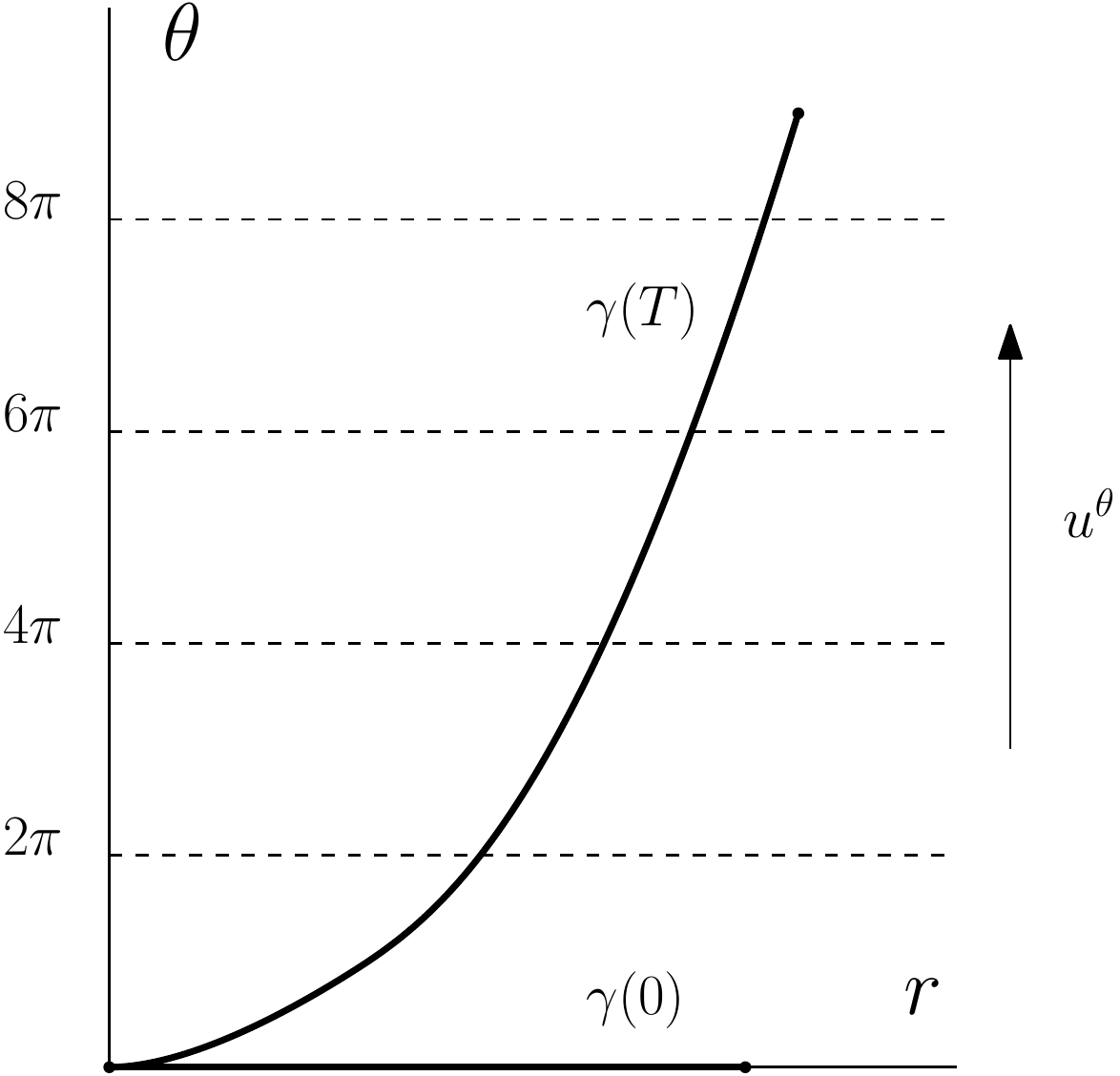} 
	\centering
	\caption{The winding number for $\gamma(t)$ is defined by the $\theta$-coordinate of the right endpoint in $(r,\theta)$ coordinates, where $\theta$ is now varying on $\mathbb{R}$ instead of $S^1$. A lower bound on $u^\theta(t,\cdot)$ at the right endpoint of $\gamma(t)$ guarantees increase of the winding number with $t$. Moreover, a large winding number guarantees a large number of intersections with any line in $\mathbb{R}^2$ passing through the origin.}
	\label{fig:winding}
\end{figure} 

We are ready to give a proof of Theorem \ref{thm:spiral}. The idea is simply to take $\gamma$ to be a curve on $\partial\Omega_0$ starting at the origin and ending at a point in $\partial\Omega_0 \cap \partial B$ where $B$ is the unit disc. Then both endpoints rotate with different angular speed (strictly speaking the origin is fixed but points on $\partial\Omega(t)$ arbitrarily close to the origin rotates at a constant angular speed by persistence of the corner angle), causing the winding number to grow (linearly) with time. A slight complication arises since the initial endpoint of such a curve may slowly move towards the origin as the patch evolves, which then may not rotate around the origin with the desired angular speed.  

\begin{proof}[Proof of Theorem \ref{thm:spiral}]
	We fix some $0 < \theta_0 < \frac{\pi}{4}$ and for any given $\epsilon > 0$, we may take the 4-fold symmetric patch $\Omega_0$ as in Example \ref{ex:spiral} with $\delta, \nu > 0$ sufficiently small, so that the stability theorem gives \begin{equation*}
	\begin{split}
	\V {\bf 1}_{\Omega(t)} -{\bf 1}_B \V_{L^1} \le \epsilon^2\quad\forall t \ge 0 
	\end{split}
	\end{equation*} where $\Omega(t)$ is the patch solution with initial data $\Omega_0$. Below we shall choose $\epsilon > 0$ to be sufficiently small with respect to several parameters depending only on $\theta_0$. 
	
	Assume for a moment that, for any $t \ge 0$, there is an injective curve $\gamma(t) \subset \partial\Omega(t)$ which has winding number greater than $ct$ for some constant $c = c(\theta_0) > 0$. Using this, the rest of the statements of Theorem \ref{thm:spiral} follows immediately. The statement regarding the number of intersections follows from Lemma \ref{lem:winding}. To see that the second statement holds, assume that the option (ii) does not hold; that is, there exists some $r > 0$, \begin{equation*}
	\begin{split}
	\lim_{ t \rightarrow \infty} | \{ |x| < r : x \in \partial\Omega(t) \cap L \} | \ne +\infty. 
	\end{split}
	\end{equation*} Then there exists a sequence of time moments $t_k \rightarrow +\infty $ such that \begin{equation*}
	\begin{split}
	| \{ |x| < r : x \in \partial\Omega(t_k) \cap L \} | \le M 
	\end{split}
	\end{equation*} for some $M > 0$. For each $t_k$, we deduce from $N[\gamma(t_k)] \ge ct_k$ and the above that whenever $k$ is sufficiently large, $\partial\Omega(t_k) \ge c'rt_k$ for some constant $c' > 0$. This establishes (i). 
	
	Returning to the proof of the above claim, let $u(t)$ and $u_B$ be the velocities associated with patches ${\bf 1}_{\Omega(t)}$ and ${\bf 1}_B$, respectively. Then, from \begin{equation*}
	\begin{split}
	u(t,x) - u_B(x) = \frac{1}{2\pi} \int_{\mathbb{R}^2} \frac{(x-y)^\perp}{|x-y|^2} \left( {\bf 1}_{\Omega(t)}(y) - {\bf 1}_{B}(y) \right) dy \end{split}
	\end{equation*} we compute that, after splitting $\mathbb{R}^2$ into regions $\{ |x-y| < r \}$ and $\{ |x-y| \ge r\}$ and then choosing $r$ to make two terms on the right hand side equal, \begin{equation*}
	\begin{split}
	|u(t,x) - u_B(x)| & \le \frac{1}{2\pi}\left( r \V {\bf 1}_{\Omega(t)}- {\bf 1}_{B}\V_{L^1} + r^{-1} \V {\bf 1}_{\Omega(t)} - {\bf 1}_{B} \V_{L^\infty} \right) \\
	& \le \frac{1}{\pi} \V {\bf 1}_{\Omega(t)}- {\bf 1}_{B}\V_{L^1}^{\frac{1}{2}} \V {\bf 1}_{\Omega(t)}- {\bf 1}_{B}\V_{L^\infty}^{\frac{1}{2}}
	\end{split}
	\end{equation*} so that we conclude \begin{equation}\label{eq:vel_L_infty}
	\begin{split}
	\V u(t) - u_B \V_{L^\infty(\mathbb{R}^2)} \le  \epsilon
	\end{split}
	\end{equation} for all $t \ge 0$. 
	
	We now compute explicitly the angular velocity of the patch boundary at the corner, which remains the same for all times. At the initial time, we may write \begin{equation*}
	\begin{split}
	u_{\Omega_0} = u_S + \tilde{u},
	\end{split}
	\end{equation*} where $u_{\Omega_0}$ and $u_S$ are velocities associated with patches $\Omega_0$ and $S = \cup_{j=0}^3 \{ (r,\theta) : -\theta_0 +j\pi/2 < r < \theta_0 + j\pi/2 \}$. Since the patches $\Omega_0$ and $S$ coincide in a ball centered at 0, $|\tilde{u}(x)| \ll |x|$ as $|x| \rightarrow 0$. We then compute \begin{equation*}
	\begin{split}
	\lim_{r \rightarrow 0}\frac{(u_{\Omega_0}\cdot e^\theta)(r,\theta_0)}{r} = \frac{(u_{S}\cdot e^\theta)(r,\theta_0)}{r}  &= \int_0^{2\pi} |\sin(2(\theta_0 - \theta'))| {\bf 1}_{[-\theta_0,\theta_0]} d\theta' \\
	&= \frac{1}{4}(1 - \cos(4\theta_0)).
	\end{split}
	\end{equation*} From the ODE system of \ref{mainthm:wellposedness}, \begin{equation*}
	\begin{split}
	\forall t > 0, \quad \lim_{r \rightarrow 0}\frac{1}{r}(u_{\Omega(t)}\cdot e^\theta)(r,\theta_0 +\frac{t}{4}(1 - \cos(4\theta_0)) ) = \frac{1}{4}(1 - \cos(4\theta_0)).
	\end{split}
	\end{equation*} Note that \begin{equation*}
	\begin{split}
	0 < \frac{1}{4}(1 - \cos(4\theta_0)) < \frac{1}{2},
	\end{split}
	\end{equation*} whereas $$\frac{u_B\cdot e^\theta}{r} = \frac{1}{2}$$ for $r \le 1$.
	
	For simplicity, let us set $c_0 = \frac{1}{4}(1 - \cos(4\theta_0))$ and work in a reference frame which rotates in the clockwise direction (note that the patches are rotating in the counter-clockwise direction since $e^\theta = (e^r)^\perp$) with angular speed $c_0$ around the origin, so that the tangent vectors at the corner are stationary for all times. In this frame, the unit disc now rotates with angular speed $c_1 := \frac{1}{2} - c_0$. 
	
	\medskip
	
	\noindent \textbf{Claim.} For any $T > 0$ there exists a point $x(T) \in \Omega(T)^1 := \Phi(T,\Omega_0^1)$ such that 
	the curve $\Phi(T,\gamma(T))$ has winding number at least $cT$ for some  constant $c > 0$, where $\gamma(T)$ is any injective curve belonging to the initial patch $\Omega_0^1$ and connecting the origin with the point $\Phi_T^{-1}(x(T)) =: \tilde{x}(T)$.
	
	\medskip 
	
	To show this, we observe that by taking $\epsilon > 0$ sufficiently small, an arbitrarily high portion of the points inside the patch rotates with an angular speed comparable to that of the unit disc. That is, for given small $r_0 > 0$, if $\rho \ge r_0$, then we have from \eqref{eq:vel_L_infty} that \begin{equation*}
	\begin{split}
	\V u(t)\cdot e^\theta - u_B \cdot e^\theta \V_{L^\infty} \le \epsilon,
	\end{split}
	\end{equation*} and evaluating it at a point $x$ with distance to the origin $\rho$, since $u_B\cdot e^\theta = c_1\rho$, we obtain \begin{equation*}
	\begin{split}
	|u^\theta(t,x) - c_1| \le \frac{\epsilon}{\rho} \le \frac{\epsilon}{r_0}. 
	\end{split}
	\end{equation*} That is, given $r_0 > 0$, we may take $\epsilon > 0$ smaller if necessary to guarantee that \begin{equation}\label{eq:lowerbound_av}
	\begin{split}
	u^\theta(t,x) \ge \frac{c_1}{2},\quad \forall t\ge 0, |x| \ge r_0. 
	\end{split}
	\end{equation} Assume that a point $\tilde{x} \in \Omega_0^1 $ has the property that  \begin{equation*}
	\begin{split}
	\frac{1}{T}\left| A \right| := \frac{1}{T}\left| \{ 0 \le  t \le T : |\Phi(t,\tilde{x})| \ge r_0 \} \right| \ge \eta ,
	\end{split}
	\end{equation*}  where $\eta>0$ is sufficiently close to 1. Then, taking $\gamma(T)\subset \overline{\Omega_0^1}$ to be an injective curve connecting the origin to $\tilde{x}$, we have that the winding number of the image $\Phi(T,\gamma(T))$ satisfies (cf. Figure \ref{fig:winding}) \begin{equation*}
	\begin{split}
	N[\Phi(T,\gamma(T))] = \int_0^T u^\theta(t, \Phi(t, \tilde{x}) ) dt &= \int_A u^\theta(t, \Phi(t, \tilde{x}) ) dt + \int_{[0,T]\backslash A} u^\theta(t, \Phi(t, \tilde{x}) ) dt \\
	&\ge \eta T \frac{c_1}{2} - C(1-\eta)T,
	\end{split}
	\end{equation*} where $C > 0$ is an absolute constant in the estimate \begin{equation*}
	\begin{split}
	|u^\theta(t,x)| := \frac{|u(t,x)\cdot e^\theta|}{|x|} \le C\V \omega(t)\V_{L^\infty} = C
	\end{split}
	\end{equation*} which holds under the 4-fold symmetry assumption on $\omega$. Here, we have used that \begin{equation*}
	\begin{split}
	\frac{d}{dt} N[\Phi(t,\gamma(t))] = u^\theta(t,\Phi(t,\tilde{x})),
	\end{split}
	\end{equation*} which is a direct consequence of the definition of the winding number. Once we have chosen $\eta$ sufficiently close to 1 that \begin{equation*}
	\begin{split}
	\frac{\eta c_1}{2} > \frac{C(1-\eta)}{2}
	\end{split}
	\end{equation*} holds, we conclude that \begin{equation*}
	\begin{split}
	N[\Phi(T,\gamma(T))] \ge \frac{\eta c_1}{4} T 
	\end{split}
	\end{equation*} which finishes the proof of the \textbf{Claim} with $x(T) := \Phi(T, \tilde{x})$. We shall now show existence of such a point. Assume towards contradiction that there exists some $0 < \eta < 1$ and $T > 0$ for which every point in $\Omega_0^1$ spends less than $\eta$-fraction of time outside the ball $B_0(r_0)$ during the time interval $[0,T]$. The total area of points from $\Omega(t)^1$ which can belong to $B_0(r_0)$ at any moment of time $0 \le t \le T$ is clearly bounded by $ \pi r_0^2 / 4$. Integrating over time, we get an upper bound of $\pi r_0^2T/4$. On the other hand, since every point of $\Omega_0^1$ is forced to spend at least $\eta$-fraction of time inside $B_0(r_0)$, we get a lower bound of $\eta T |\Omega_0^1|$ for the total area of points from $\Omega(t)^1$ lying in $B_0(r_0)$, integrated over the time interval $[0,T]$. We get a contradiction once \begin{equation*}
	\begin{split}
	\eta T |\Omega_0^1| > \frac{\pi r_0^2 T}{4},
	\end{split}
	\end{equation*} which is easy to arrange by taking $|\Omega_0^1|$ close to $\frac{\pi}{4}$ and $r_0 > 0$ small. 
	
	We are now in a position to finish the proof, by a simple continuity argument. Fix some $T > 0$ and take the point $\tilde{x}(T)$ satisfying the property described in the above \textbf{Claim}. There is a curve $\Phi(T,\gamma(T)) \subset \Omega(T)^1$ with winding number at least $cT$. Using this it is easy to find, with a continuity argument, an injective curve along the boundary $\partial(\Omega(T)^1)$, starting at the origin, with winding number exceeding that of $\Phi(T,\gamma(T))$. To see this we consider the curve $\tilde{\gamma}$ lying on $[0,\infty) \times S^1$ defined by expressing the image of $\gamma(T)$ in $\mathbb{R}^2$ with polar coordinates. Then using the canonical projection map $\mathbb{R} \rightarrow S^1$, we can lift this curve (its image, to be precise) to lie on $[0,\infty) \times \mathbb{R}$. Then, by definition of the winding number, $\tilde{\gamma}$ has a point in its image with second coordinate at least $cT$. Repeating this procedure for the boundary $\partial(\Omega(T)^1)$, viewed as a closed and injective curve starting and ending at the origin, we see that the corresponding image in $[0,\infty) \times \mathbb{R}$ should contain the image of $\tilde{\gamma}$. In particular, there is a point on this image with its second coordinate strictly larger than $cT$. The proof is now complete. 
\end{proof}

\bibliographystyle{plain}
\bibliography{thesis}

\end{document}